\renewcommand*{\backref}[1]{}
\renewcommand*{\backrefalt}[4]{%
	\ifcase #1 (Not cited.)%
	\or        (Cited on page~#2.)%
	\else      (Cited on pages~#2.)%
	\fi}
\newcommand{\F}{\mathbb{F}}
\newcommand{\V}{\mathbb{V}}
\newcommand{\Z}{\mathbb{Z}}
\renewcommand{\to}{\rightarrow}
\newcommand{\T}{{\rm Tr}}
\def\aa{{\bf a}}
\def\bb{{\bf b}}
\def\cc{{\bf c}}
\def\uu{{\bf u}}
\def\xx{{\bf x}}
\def\yy{{\bf y}}
\def\00{{\bf 0}}
\def\11{{\bf 1}}
\def\+{\oplus}
\DeclareMathOperator{\Tr}{Tr}
\numberwithin{equation}{section}
\newtheorem{fact}[theorem]{Fact}
\newtheorem{construction}[theorem]{Construction}
\begin{document}
\title{Vectorial Negabent Concepts:\ \\ Similarities, Differences, and Generalizations
\thanks{In loving memory of Kai-Uwe Schmidt.}
}

\titlerunning{Vectorial Negabent Concepts}

\author{Nurdag\"{u}l Anbar \and Sadmir Kudin \and Wilfried Meidl \and Enes Pasalic \and Alexandr Polujan}

\authorrunning{N. Anbar, S. Kudin, W. Meidl, E. Pasalic, A. Polujan}

\institute{ 
			Nurdag\"{u}l Anbar \at Sabanc{\i} University, MDBF, Orhanl\i, Tuzla, 34956 \. Istanbul, Turkey \ \\
			\email{nurdagulanbar2@gmail.com}
			\and
            Sadmir Kudin \at 
            University of Primorska, FAMNIT \& IAM, Glagolja\v{s}ka 8, 6000 Koper, Slovenia \\
            \email{sadmir.kudin@iam.upr.si}
            \and
            Wilfried Meidl \at
            Institut f\"ur Mathematik, Alpen-Adria-Universit\"at Klagenfurt, Austria \ \\
            \email{meidlwilfried@gmail.com}  
            \and
            Enes Pasalic \at 
            University of Primorska, FAMNIT \& IAM, Glagolja\v{s}ka 8, 6000 Koper, Slovenia \\
            \email{enes.pasalic6@gmail.com}
            \and 
            Alexandr Polujan \at
              Otto von Guericke University, Universit\"{a}tsplatz 2, 39106, Magdeburg, Germany \\
              \email{alexandr.polujan@ovgu.de}            
}

\date{Received: date / Accepted: date}

\maketitle

\begin{abstract}
In Pasalic et al., IEEE Trans. Inform. Theory 69 (2023), 2702--2712, and in Anbar, Meidl, Cryptogr. Commun. 10 (2018), 235--249, two different vectorial negabent and vectorial bent-negabent concepts are introduced, which leads to seemingly contradictory results. One of the main motivations for this article is to clarify the differences and similarities between these two concepts.  Moreover, the negabent concept is extended to generalized Boolean functions from  \(\mathbb{F}_2^n\) to the cyclic group \(\mathbb{Z}_{2^k}\). It is shown how to obtain nega-\(\mathbb{Z}_{2^k}\)-bent  functions from \(\mathbb{Z}_{2^k}\)-bent functions, or equivalently, corresponding non-splitting  relative difference sets from the splitting relative difference sets. This generalizes the shifting results for Boolean bent and negabent functions. We finally point to constructions of \(\mathbb{Z}_8\)-bent functions employing permutations with the \((\mathcal{A}_m)\) property, and more generally we show that the inverse permutation gives rise to \(\mathbb{Z}_{2^k}\)-bent functions.
\keywords{Bent function \and Generalized bent function \and (Vectorial) Negabent function \and $\mathbb{Z}_{2^k}$-bent function \and Relative difference set.}
\subclass{05B10 \and 06E30 \and 14G50 \and 94C30.}
\end{abstract}

\section{Introduction}
\thispagestyle{empty}

Let $(A,+_A)$, $(B,+_B)$ be finite abelian groups. A function $f$ from $A$ to $B$ is called a {\it bent function} if
\begin{equation}
	\label{CS}
	|\sum_{x\in A}\chi(x,f(x))| = \sqrt{|A|}
\end{equation}
for every character $\chi$ of $A\times B$ which is non-trivial on $B$. 
Equivalently, $f$ is bent if and only if for all nonzero $a\in A$ the derivative $D_af$
in direction $a$,
\[ D_af(x) = f(x +_A a) -_B f(x) \]
is balanced, i.e, every value of $B$ is taken on the same number, $|A|/|B|$, of times.
This applies if and only if the graph $\mathcal{G}_f = \{(x,f(x))\,:\,x\in A\}$ is a 
splitting relative difference set in $A\times B$ relative to $B$. We refer to \cite{pott} 
for details.

In the classical case, $A = \V_n^{(p)}$ and $B = \V_m^{(p)}$ are elementary abelian $p$-groups, 
i.e., they are vector spaces of dimension $n$ and $m$ respectively over the prime field $\F_p$ 
for some prime $p$. We are here interested solely in the case that $p=2$, i.e., in Boolean and vectorial
Boolean functions, hence we may simply write $\V_n$ for $\V_n^{(2)}$. 
Then the character sum in $(\ref{CS})$, called the {\it Walsh transform} of $f$ at $(a,b)\in\V_m\times\V_n$,
$a\ne 0$, is of the form
\begin{equation*}
	\mathcal{W}_f(a,b) = \sum_{x\in \V_n}(-1)^{\langle a,f(x)\rangle_m + \langle b,x\rangle_n}, 
\end{equation*}
where $\langle \cdot , \cdot \rangle_k$ denotes an inner product in $\V_k$.
In the Boolean case, the Walsh transform reduces to
\begin{equation}
	\label{Walsh1} 
	\mathcal{W}_f(b) = \sum_{x\in \V_n}(-1)^{f(x) + \langle b,x\rangle_n}.
\end{equation}
A function $f\colon\V_n\rightarrow\V_m$ is then \textit{bent} if $|\mathcal{W}_f(a,b)| = 2^{n/2}$ for all nonzero $a\in\V_m$ and $b\in\V_n$.
Clearly, $n$ must then be even, and as it is well-known, $m$ can be at most $n/2$, see 
\cite[Corollary]{nyb}.

Motivated by applications in quantum computing, another class of Boolean functions from 
$\F_2^n$ to $\F_2$, having a flat spectrum with respect to another unitary transform, 
was introduced in \cite{pr}:

For $\cc = (c_1,\ldots,c_n)$, $\xx = (x_1,\ldots,x_n)$ in $\F_2^n$, let $s_2^\cc\colon\F_2^n\rightarrow\F_2$
be the Boolean function
\[ s_2^\cc(\xx) = \sum_{1\le i<j\le n}(c_ix_i)(c_jx_j) \ . \]
Then a unitary transform $\mathcal{U}_f^\cc\colon\F_2^n\rightarrow\mathbb{C}$ is defined by (cf.\cite{gps})
\begin{equation}
	\label{UTF1}
	\mathcal{U}_f^\cc(\bb) = \sum_{\xx\in\F_2^n}(-1)^{f(\xx)+ s_2^\cc(\xx)}i^{\cc\cdot \xx}(-1)^{\bb\cdot \xx}.
\end{equation}
A function $f\colon\F_2^n\rightarrow\F_2$ is called a {\it $\cc$-bent$_4$ function} 
if for every $\bb\in \F_2^n$ we have $|\mathcal{U}_f^\cc(\bb)| = 2^{n/2}$. If $f$ is $\cc$-bent$_4$
for some nonzero $\cc\in \F_2^n$, we call $f$ a {\it bent$_4$ function}. Note that for $\cc=\00=(0, \ldots, 0)$, Equation
$(\ref{UTF1})$ reduces to the Walsh transform $(\ref{Walsh1})$. 

Most attention is given in the literature to $\cc$-bent$_4$ functions $f$ for $\cc = \11 =(1, \ldots,1)$, in which case 
$f$ is called a {\it negabent function}. Basically all results on negabent functions hold for 
$\cc$-bent$_4$ functions for any $\cc\ne \00$ in a similar way.

Univariate versions of $c$-bent$_4$ functions for some $c\in\F_{2^n}$, and negabent functions (for $c=1$) are introduced in \cite{nuwi}, as functions which have a flat spectrum with respect to the transforms
\begin{equation}
	\label{VTF}
	\mathcal{V}_{f}^c(b) = \sum_{x\in\F_{2^n}}(-1)^{f(x)+ \sigma(c,x)}i^{\Tr^n_1(cx)}(-1)^{\Tr^n_1(bx)} \ ,
\end{equation}
where $\Tr^n_1\colon\F_{2^n} \to \F_{2}$ is the \textit{absolute trace}, i.e., $\Tr^n_1(x)=\sum_{i=0}^{n-1} x^{2^i}$, and 
for $c,x\in\F_{2^n}$, $\sigma(c,x)$ is defined as
\[ \sigma(c,x) = \sum_{0\le i<j\le n-1}(cx)^{2^i}(cx)^{2^j}. \]
Note that $\sigma(c,x)^2 = \sigma(c,x)$, and hence $\sigma(c,x)$ is a Boolean function.

Similar as for bent functions, bent$_4$ functions can alternatively be defined with a 
modified version of a derivative. A function $f\colon\F_2^n\rightarrow\F_2$ 
($f\colon\F_{2^n}\rightarrow\F_2$) is \textit{$\cc$-bent$_4$ ($c$-bent$_4$)} if
\begin{equation*}
	f(\xx) + f(\xx + \aa) + \cc \cdot (\aa \odot \xx) \qquad (f(x) + f(x+a) + \Tr^n_1(cax))
\end{equation*}
is balanced for every nonzero $\aa\in\F_2^n$ ($a\in\F_{2^n}$), 
where for $\aa = (a_1,\ldots,a_n)$ and $\xx = (x_1,\ldots,x_n)$, 
$\aa \odot \xx = (a_1x_1,\ldots,a_nx_n)$.

Bent$_4$ functions also correspond to relative difference sets:\\
The binary operation on the set $\F_2^n\times\F_2$ ($\F_{2^n}\times\F_2$) given by
\begin{align*} 
	& (\xx_1,\yy_1) \star (\xx_2,\yy_2) = (\xx_1+\xx_2,\yy_1+\yy_2+ \cc\cdot(\xx_1 \odot \xx_2)) \\
	& ( (x_1,y_1) \star (x_2,y_2) = (x_1+x_2,y_1+y_2 + \T^n_1(cx_1x_2)) ),
\end{align*}
for some nonzero $\cc\in\F_2^n$ ($c\in\F_{2^n}$), defines a group which is isomorphic to $\F_2^{n-1}\times \Z_4$.
A function $\colon\F_2^n\rightarrow\F_2$ ($\F_{2^n}\rightarrow\F_2$) is \textit{$\cc$-bent$_4$ ($c$-bent$_4$)}
if and only if the graph of $f$ is a (non-splitting) relative difference set in $G$ relative to $2\Z_4$,
see for instance \cite{amp}.

Negabent and bent$_4$ functions have been intensively investigated. We refer to the following pioneering works~\cite{pr,spp} related to these concepts. A fundamental result is that a bent$_4$ function 
(in an even number of variables) is obtained from a bent function with a shift (and vice versa):
\begin{fact}
	\label{fact}
	Let $n$ be an even integer. A function $f \colon \F_2^n\rightarrow\F_2$ ($f \colon \F_{2^n}\rightarrow\F_2$)
	is $\cc$-bent$_4$ ($c$-bent$_4$) if and only if $g(\xx) = f(\xx) + s_2^\cc(\xx)$
	($g(x) = f(x) + \sigma(c,x)$) is a Boolean bent function.
\end{fact}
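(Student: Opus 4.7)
I would prove the vectorial version of the statement; the univariate version follows by exactly the same argument with $\Tr^n_1(cx)$ and $\sigma(c,x)$ in place of $\cc\cdot\xx$ and $s_2^\cc(\xx)$. The strategy is to express $\mathcal{U}_f^\cc$ in terms of the ordinary Walsh transform of $g:=f+s_2^\cc$ and then to invoke a short arithmetic observation about representations of $2^{n+1}$ as a sum of two integer squares.

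Setting $g=f+s_2^\cc$, the factor $(-1)^{s_2^\cc(\xx)}$ in the definition of $\mathcal{U}_f^\cc$ is absorbed into $g$, giving
\[
\mathcal{U}_f^\cc(\bb) \;=\; \sum_{\xx\in\F_2^n}(-1)^{g(\xx)+\bb\cdot\xx}\,i^{\cc\cdot\xx},
\]
with $i^{\cc\cdot\xx}\in\{1,i\}$ according to whether $\cc\cdot\xx$ is $0$ or $1$ in $\F_2$. Splitting the sum along the hyperplane $\cc\cdot\xx=0$ via the identity $\mathbf{1}_{\cc\cdot\xx=k}=\tfrac{1}{2}(1+(-1)^{k+\cc\cdot\xx})$, I would derive
\[
\mathcal{U}_f^\cc(\bb) \;=\; \tfrac{1+i}{2}\,\mathcal{W}_g(\bb)+\tfrac{1-i}{2}\,\mathcal{W}_g(\bb+\cc),
\]
and hence, since $\mathcal{W}_g$ is real,
\[
|\mathcal{U}_f^\cc(\bb)|^2 \;=\; \tfrac{1}{2}\bigl(\mathcal{W}_g(\bb)^2+\mathcal{W}_g(\bb+\cc)^2\bigr).
\]

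Therefore $f$ is $\cc$-bent$_4$ if and only if $\mathcal{W}_g(\bb)^2+\mathcal{W}_g(\bb+\cc)^2=2^{n+1}$ for every $\bb\in\F_2^n$. One direction is trivial: if $g$ is bent, every $\mathcal{W}_g(\bb)^2$ equals $2^n$ and the displayed condition is automatic. For the converse, the hypothesis that $n$ is even enters through the arithmetic fact that, for $n$ even, the only solutions of $a^2+b^2=2^{n+1}$ in integers are $|a|=|b|=2^{n/2}$. This follows from unique factorization in $\mathbb{Z}[i]$: since $2=-i(1+i)^2$, every Gaussian integer of norm $2^{n+1}$ is a unit multiple of $(1+i)^{n+1}=i^{n/2}\cdot 2^{n/2}(1+i)$, whose real and imaginary parts both have absolute value $2^{n/2}$. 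Applied pointwise to $(\mathcal{W}_g(\bb),\mathcal{W}_g(\bb+\cc))$, this forces $\mathcal{W}_g(\bb)^2=2^n$ for every $\bb$, so $g$ is bent.

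The essentially only non-routine step is this last arithmetic observation, and it is also what makes the assumption ``$n$ even'' indispensable: for $n$ odd, the pair $(0,\pm 2^{(n+1)/2})$ would satisfy the pointwise sum-of-squares equation without forcing $g$ to be bent.
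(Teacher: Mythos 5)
Your proof is correct. The paper states this Fact without proof, citing the pioneering works \cite{pr,spp}, and your argument --- decomposing $\mathcal{U}_f^\cc(\bb)$ as $\tfrac{1+i}{2}\mathcal{W}_g(\bb)+\tfrac{1-i}{2}\mathcal{W}_g(\bb+\cc)$, reducing the flat-spectrum condition to $\mathcal{W}_g(\bb)^2+\mathcal{W}_g(\bb+\cc)^2=2^{n+1}$, and invoking the uniqueness (for $n$ even) of the representation of $2^{n+1}$ as a sum of two integer squares --- is precisely the standard proof found in those references, with all steps (including the Gaussian-integer argument and the role of the parity of $n$) checking out.
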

Also for this reason, research on negabent functions focused on bent-negabent functions, i.e., on functions which are simultaneously bent and negabent.

In \cite{kppp}, a concept for a vectorial version of a negabent function, and in particular of a bent-negabent function is introduced. 
For an even integer $n$, a function $F\colon\F_2^n\rightarrow\F_2^m$ ($F\colon\F_{2^n}\rightarrow\F_{2^m}$)
is called a \textit{vectorial negabent function} if all (nonzero) component functions of $F$ are negabent. The function $F$ is called \textit{vectorial bent-negabent} if all (nonzero) components of $F$ are bent-negabent.

Motivated by an analysis of the component functions of modified planar functions in \cite{psz}, a different concept for a vectorial negabent (or bent$_4$) functions is introduced in \cite{nuwi}.

Recall that a function $F$ on $\F_{2^n}$ (w.l.o.g.) is a \textit{modified planar function} if
$F(x) + F(x+a) + ax$ is a permutation for every nonzero $a\in\F_{2^n}$.
As initially pointed out in \cite{psz} (see also \cite{nuwi}), the components of a modified planar function are essentially negabent
(actually bent$_4$) functions. Hence, a modified planar function can be seen as a vectorial version of a
negabent (or bent$_4$) function.

One of the main motivations for this
article is to clarify the differences and similarities between these two concepts, which will be accomplished in Section \ref{sec2}.
In particular, seemingly contradictory results, which arise from these two different concepts
require to be explained: Whereas in \cite{kppp} it is proved that for a vectorial bent-negabent
function from $\V_{2m}$ to $\V_k$, $k$ can be at most $m-1$, in \cite{nuwi} examples of a function from $\F_{2^{2m}}$ to $\F_{2^m}$ are given which --- as stated in \cite{nuwi} ---
``are in some sense vectorial versions of bent-negabent functions".

In Section \ref{sec3}, we introduce generalizations of negabent functions to generalized Boolean functions, i.e., to functions from $\V_n^{(2)}$ to the cyclic group $\Z_{2^k}$. We describe the unitary transform for these functions, and show that similar as for bent and nega-bent functions, one can transform $\Z_{2^k}$-bent functions to nega-$\Z_{2^k}$-bent functions, or equivalently, the corresponding splitting relative difference sets to non-splitting relative difference sets. In Section \ref{sec4}, we investigate constructions of $\mathbb{Z}_8$-bent functions employing permutations with the $(\mathcal{A}_m)$ property, and more generally we show that the inverse permutation gives rise to $\mathbb{Z}_{2^k}$-bent functions.
\section{Vectorial negabent, bent$_4$ and bent-negabent functions}
\label{sec2}
We first turn our attention to the results in \cite{kppp}, where vectorial negabent functions are 
defined as functions $F$ from $\F_2^n$ to $\F_2^k$, for which every (nonzero) component function
is negabent. The paper \cite{kppp} focuses on vectorial bent-negabent functions, hence it is supposed  that $n = 2m$ is even. Particularly, it gives a bound on the dimension of vectorial bent-negabent functions and several construction methods of vectorial bent-negabent functions. It should be emphasized that the design methods in~\cite{kppp} provide instances of vectorial bent-negabent functions that attain the upper bound on the output dimension $k$, that is, $k=m-1$.

\begin{proposition}\cite{kppp}
	\label{kpppprop}
	\begin{itemize}
		\item[-] Let $F \colon \F_2^{2m}\rightarrow\F_2^k$ be a vectorial bent-negabent function. Then, $k$ is at most $m-1$. 
		\item[-] Let $a_1, a_2,\ldots, a_{m-1}$ be elements of $\F_{2^m}$ which are linearly independent
		over $\F_2$, such that their span does not contain the element $1\in\F_{2^m}$. For $i = 1,\ldots,m-1$,
		let $\pi_i$ be the linear permutation $y\rightarrow a_iy$ and $f_i\colon\F_{2^m}\times\F_{2^m}\rightarrow\F_2$
		the Maiorana-McFarland bent function $f_i(x,y) = \T^m_1(x\pi_i(y)) + \rho_i(y)$, where $\rho_i$ is an
		arbitrary function from $\F_{2^m}$ to $\F_2$. Then the function 
		$F \colon \F_{2^m}\times\F_{2^m}\rightarrow\F_2^{m-1}$, $F(x,y) = (f_1(x,y),f_2(x,y),\ldots,f_{m-1}(x,y))$ 
		is affine equivalent to a vectorial bent-negabent function. 
		In fact, if $F^\prime$ denotes the function $F$ represented in multivariate form, i.e., as function 
		from $\F_2^{2m}$ to $\F_2^{m-1}$, then $G(\xx) = F^\prime(\xx A+b)$ is vectorial bent-negabent, where
		$A\in GL(2m,\F_2)$ and $b\in\F_2^{2m}$ are given by $s_2^\11(\xx) = \mu(\xx A + b) + \uu\cdot \xx + e$
		for some $\uu\in\F_2^{2m}$, $e\in\F_2$ and $\mu$ is the quadratic bent function 
		$\mu(\xx) = x_1x_{m+1} + x_2x_{m+2} + \cdots + x_mx_{2m}$.
	\end{itemize}
\end{proposition}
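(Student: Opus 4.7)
The plan is to prove the two parts separately, using Fact \ref{fact} as the bridge between negabent and bent conditions.

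For the upper bound, the key observation is that the negabent hypothesis, combined with Fact \ref{fact} applied to $\cc = \11$, says that for every nonzero $\lambda \in \F_2^k$, both $\lambda \cdot F$ and $\lambda \cdot F + s_2^{\11}$ are bent Boolean functions. Since $n = 2m$ is even, $s_2^{\11}$ is itself a quadratic bent function (as the second item of the proposition records, it is affinely equivalent to the standard symplectic form $\mu$). Moreover $s_2^{\11}$ cannot lie in the span of the component functions of $F$, since otherwise $\lambda \cdot F + s_2^{\11}$ would vanish for some nonzero $\lambda$, contradicting bentness. Hence the $\F_2$-subspace spanned by the components of $F$ together with $s_2^{\11}$ has dimension $k+1$ and every nonzero element is bent. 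This subspace defines a vectorial bent function $\F_2^{2m} \to \F_2^{k+1}$, and Nyberg's bound \cite{nyb} forces $k+1 \le m$.

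For the construction, I would first check that $F$ itself is vectorial bent: any nonzero linear combination of its components equals
\[ \T^m_1\bigl(x\, a_c\, y\bigr) + \rho_c(y), \qquad a_c := \textstyle\sum c_i a_i, \; \rho_c := \sum c_i \rho_i, \]
and linear independence of the $a_i$ gives $a_c \neq 0$, so this is a Maiorana-McFarland bent function. To promote $G(\xx) = F^\prime(\xx A + b)$ to vectorial bent-negabent, Fact \ref{fact} reduces the task to showing that $\lambda \cdot G(\xx) + s_2^{\11}(\xx)$ is bent for every nonzero $\lambda$. Substituting the identity $s_2^{\11}(\xx) = \mu(\xx A + b) + \uu \cdot \xx + e$ and changing variables $\yy = \xx A + b$ turns this, modulo affine terms that preserve bentness, into bentness of $(\lambda \cdot F^\prime)(\yy) + \mu(\yy)$. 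Re-reading in the bivariate form with $\mu(x,y) = \T^m_1(xy)$, this becomes $\T^m_1\bigl(x(a_c + 1)y\bigr) + \rho_c(y)$, which is Maiorana-McFarland bent iff $a_c \neq 1$---exactly the content of the hypothesis $1 \notin \langle a_1, \ldots, a_{m-1}\rangle$.

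The main conceptual step is the trick behind the bound: recognizing that the negabent hypothesis is precisely what allows $s_2^{\11}$ to be appended as an ``extra component'' of the vectorial bent function, thereby reducing the problem to Nyberg's classical bound on vectorial bent functions. The construction is then mostly bookkeeping, with the two design conditions on $a_1, \ldots, a_{m-1}$ (linear independence, and $1$ outside their $\F_2$-span) matching exactly the two non-vanishing requirements $a_c \neq 0$ and $a_c + 1 \neq 0$ of the Maiorana-McFarland criterion.
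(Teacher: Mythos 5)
The paper offers no proof of this proposition---it is imported verbatim from \cite{kppp}---but your argument is correct and is essentially the one given in that source: the bound is obtained by adjoining $s_2^{\11}$ (bent since $2m$ is even) as an extra coordinate, observing via Fact~\ref{fact} that every nonzero element of the resulting $(k+1)$-dimensional space $\langle f_1,\dots,f_k, s_2^{\11}\rangle$ is bent, and invoking Nyberg's bound \cite{nyb}; the construction reduces via the same Fact and the affine change of variables to the Maiorana--McFarland criterion, with $a_c\neq 0$ and $a_c\neq 1$ supplied exactly by the two hypotheses on $a_1,\dots,a_{m-1}$. The only point worth making explicit is that the multivariate representation $F'$ must be taken with respect to dual bases so that $\T^m_1(xy)$ becomes $\mu(\xx)$, and that the required $A,b$ exist because $s_2^{\11}$ and $\mu$ are both full-rank quadratic bent functions and hence affine equivalent.
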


In \cite{zhou}, Zhou introduced the concept of a modified planar function on $\V_n^{(2)}$ to 
express some $(2^n,2^n,2^n,1)$-relative difference sets as a graph of a function.
Relative difference sets with such parameters are particularly interesting, as they give rise to projective planes.

Recall that a function $F$ on $\F_{2^n}$ is modified planar, if for all nonzero $a\in\F_{2^n}$
the modified derivative
\[ F(x+a) + F(x) + ax \]
is a permutation of $\F_{2^n}$. Equivalently, the graph of $F$,
$\mathcal{G}_F = \{(x,F(x))\,:\,x\in\F_{2^n}\}$ is a relative difference set in
$(\F_{2^n}\times \F_{2^n},\star)  \simeq \Z_{4}^n$ where $(x_1,y_1) \star (x_2,y_2) = 
(x_1+x_2,y_1+y_2+x_1x_2)$.
As noted in \cite{psz}, the components of a modified planar function are essentially bent$_4$
functions (which by that time were mainly investigated in the multivariate framework).

As all known classes of modified planar functions have been discovered in univariate representation, for a more detailed analysis of their components the univariate versions of bent$_4$ functions (as given in the introduction) have been introduced in \cite{nuwi}. 
Hence, in the following we will state some main results of \cite{nuwi} in univariate form.
(We remark that the set of univariate bent$_4$ functions is not exactly the set of functions one 
obtains from the multivariate bent$_4$ functions by switching to univariate representation,
see Remark 12 in \cite{nuwi}.)

In Section 4 in \cite{nuwi}, also vectorial versions of bent$_4$ functions are introduced.
As in \cite{nuwi}, we state it in univariate form, and therefore suppose that $k$ divides $n$.
We call a function $F\colon\F_{2^n} \rightarrow\F_{2^k}$ a vectorial bent$_4$ function, if  
\[ F(x+a) + F(x) + \T^n_k(ax) \]
is balanced for every nonzero $a\in\F_{2^n}$, where $\T^n_k$ is the relative trace from $\F_{2^n}$ to $\F_{2^k}$.
In the following proposition, we summarize Proposition 15 and Theorem 8 in \cite{nuwi}.
\begin{proposition}
	\label{nuwires}
	Let $F$ be a function from $\F_{2^n}$ to $\F_{2^k}$. Then the followings are equivalent.
	\begin{itemize}
		\item[(i)] $F$ is a vectorial bent$_4$ function.
		\item[(ii)] For every $u\in\F_{2^n}$ and every nonzero $c\in\F_{2^k}$
		\begin{equation}
			\label{MCV}
			\mathcal{V}_F(c,u) = \sum_{x\in\F_{2^n}}(-1)^{\T^k_1(c^2F(x)) + \T^n_1(ux) + \sigma(c,x)}i^{\T^n_1(cx)}
		\end{equation}
		has absolute value $2^{n/2}$.
		\item[(iii)] The graph $\mathcal{G}_F$ of $F$ is a relative difference set (relative to
		$\{0\}\times\F_{2^k}$) in the group $(\F_{2^n}\times \F_{2^k}, \star) \simeq 
		\Z_2^{n-k}\times\Z_4^k$, where $(x_1,y_1) \star (x_2,y_2) = (x_1+x_2,y_1+y_2+\T^n_k(x_1x_2))$.
		\item[(iv)] For every nonzero $c\in\F_{2^k}$, the component function $\T^k_1(c^2F(x))$ is $c$-bent$_4$. That is, all of the $p^k-1$ component functions of $F$ are Boolean bent$_4$ functions.
	\end{itemize}
\end{proposition}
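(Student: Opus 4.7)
The plan is to prove the equivalences via the cycle $(i)\Leftrightarrow(iii)\Leftrightarrow(ii)\Leftrightarrow(iv)$, as each link admits a clean, largely computational proof.

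Step 1, $(i)\Leftrightarrow(iii)$: I would compute differences directly in $(\F_{2^n}\times\F_{2^k},\star)$. Since the inverse of $(x,y)$ in this group is $(x, y+\T^n_k(x^2))$, a short calculation gives
\[ (x_1,F(x_1))\star (x_2,F(x_2))^{-1} = \bigl(a,\, F(x+a)+F(x)+\T^n_k(ax)\bigr) \]
with $x:=x_2$ and $a:=x_1+x_2$. Therefore, for each $a\neq 0$ and $b\in\F_{2^k}$, the number of ordered pairs in $\mathcal{G}_F$ whose difference equals $(a,b)$ is $|\{x\in\F_{2^n}:F(x+a)+F(x)+\T^n_k(ax)=b\}|$, which equals the RDS constant $2^{n-k}$ for every $b$ precisely when the derivative in (i) is balanced. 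Since the diagonal case $a=0$ always yields the identity $(0,0)$, no non-identity element of $\{0\}\times\F_{2^k}$ ever occurs as a difference, confirming that (iii) is equivalent to (i).

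Step 2, $(iii)\Leftrightarrow(ii)$: I would invoke the character-theoretic criterion for RDSs (cf.\ \cite{pott}): $\mathcal{G}_F$ is an RDS relative to $N=\{0\}\times\F_{2^k}$ iff $|\chi(\mathcal{G}_F)|^2=2^n$ for every character $\chi$ of $(\F_{2^n}\times\F_{2^k},\star)$ that is non-trivial on $N$. There are $2^n(2^k-1)$ such characters, and I would exhibit them as
\[ \chi_{c,u}(x,y) = (-1)^{\T^k_1(c^2 y)+\T^n_1(ux)+\sigma(c,x)}\, i^{\T^n_1(cx)},\qquad (c,u)\in\F_{2^k}^*\times\F_{2^n}, \]
so that $\chi_{c,u}(\mathcal{G}_F)=\mathcal{V}_F(c,u)$. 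Verifying multiplicativity of $\chi_{c,u}$ is the main obstacle: one must reconcile the twisting term $\T^n_k(x_1 x_2)$ in $\star$ (which contributes $\T^n_1(c^2 x_1 x_2)$ after applying $\T^k_1(c^2\cdot)$) with the carry $i^{\T^n_1(cx_1)+\T^n_1(cx_2)-\T^n_1(c(x_1+x_2))}=(-1)^{\T^n_1(cx_1)\T^n_1(cx_2)}$ from splitting the power of $i$. Both contributions are absorbed by the identity
\[ \sigma(c,x_1+x_2) = \sigma(c,x_1)+\sigma(c,x_2)+\T^n_1(cx_1)\T^n_1(cx_2)+\T^n_1(c^2 x_1 x_2)\pmod 2, \]
which I would derive by expanding $\sigma(c,x_1+x_2)$ into its $2^i,2^j$-summands. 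A simple cardinality count then confirms that $\{\chi_{c,u}\}$ exhausts the non-trivial characters on $N$, and the RDS condition becomes $|\mathcal{V}_F(c,u)|^2=2^n$, i.e., (ii).

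Step 3, $(ii)\Leftrightarrow(iv)$: This is essentially a direct comparison of definitions. Setting $g_c(x):=\T^k_1(c^2 F(x))$, the transform $(\ref{MCV})$ is exactly $\mathcal{V}_{g_c}^c(u)$ in the notation of $(\ref{VTF})$. Hence flatness $|\mathcal{V}_F(c,u)|=2^{n/2}$ for every $u\in\F_{2^n}$ and every fixed nonzero $c\in\F_{2^k}$ is equivalent to each of the $2^k-1$ component functions $g_c$ being $c$-bent$_4$ in the univariate sense introduced earlier, which is (iv).
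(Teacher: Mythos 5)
Your argument is correct. Note first that the paper does not actually prove Proposition \ref{nuwires}: it is stated as a summary of Proposition 15 and Theorem 8 of \cite{nuwi}, so there is no in-paper proof to compare against line by line; the closest internal analogue is the proof of Theorem \ref{thm:negabent} (the nega-$\Z_{2^k}$ version). Your Step 2 follows exactly that template: exhibit the characters of the twisted group, absorb the twisting term $\T^n_1(c^2x_1x_2)$ and the carry $(-1)^{\T^n_1(cx_1)\T^n_1(cx_2)}$ via the $\sigma$-identity (this is precisely Lemma \ref{lem:sigma}, quoted from \cite{nuwi}), and invoke Lemma \ref{alex95}. Your Step 1 is where you genuinely diverge: you prove (i)$\Leftrightarrow$(iii) by directly counting differences in $(\F_{2^n}\times\F_{2^k},\star)$, whereas the paper's analogous argument links (i) and (ii) by expanding $|\mathcal{K}_f(c,u)|^2$ and then using an inversion lemma (Lemma \ref{lem:h}) together with a balancedness lemma (Lemma \ref{balanced}) to recover the derivative condition. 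Your route is more elementary and avoids both lemmas; the paper's route has the advantage of relating (i) and (ii) without passing through the RDS machinery. One small point you should make explicit in Step 2: to deduce (iii) from (ii) via Lemma \ref{alex95} you must also verify the characters that are nontrivial on $G$ but trivial on $N=\{0\}\times\F_{2^k}$, namely $(x,y)\mapsto(-1)^{\T^n_1(ux)}$ with $u\neq 0$; their sum over $\mathcal{G}_F$ is $0=\kappa-\lambda\nu$ automatically, which is the analogue of the observation ``$\mathcal{K}_f(0,u)=0$ for $u\neq 0$'' that the paper records explicitly in the proof of Theorem \ref{thm:negabent}.
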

\begin{remark}
	In \cite[Theorem 8]{nuwi}, Proposition \ref{nuwires}(iv) is shown for modified planar functions, 
	but it is easily verified that it holds more general for vectorial bent$_4$ functions from 
	$\F_{2^n}$ to $\F_{2^k}$.
\end{remark}
\begin{remark}
	Modified planar functions are precisely the vectorial bent$_4$ functions from $\F_{2^n}$ to
	$\F_{2^n}$. Note that $F\colon\F_{2^n}\rightarrow\F_{2^n}$ is a modified planar function if and only
	if $\sum_{x\in\F_{2^n}}(-1)^{\T^n_1(c^2F(x) + ux )+ \sigma(c,x)}i^{\T^n_1(cx)}$ has absolute
	value $2^{n/2}$, for every $u$ and every nonzero $c\in\F_{2^n}$.
	For $k=1$, the definition of a vectorial bent$_4$ function reduces to the definition of a 
	negabent function.
	We also remark that differently from Boolean bent$_4$ functions, in general, vectorial bent$_4$
	functions are not a shift of a (vectorial) bent function. Whereas for a vectorial bent function
	from $\F_{2^n}$ to $\F_{2^k}$, $k$ can be at most $n/2$, we know that there exist vectorial bent$_4$ functions (modified planar functions) on $\F_{2^n}$.
\end{remark}

In \cite[Section 4]{nuwi}, constructions of vectorial Maiorana-McFarland bent functions from $\F_{2^{2m}}$ to $\F_{2^m}$ which are simultaneously vectorial bent$_4$, were proposed. An example
of such a vectorial function $F\colon\F_{2^{2m}}\rightarrow \F_{2^m}$, $m$ odd, is $F(x + \gamma y) = x\pi(y) + g(y)$, where $\gamma$ is a root of $x^2+x+1$, $\pi \in \F_{2^m}[x]$ is a
linearized complete mapping, and $g$ is an arbitrary function on $\F_{2^m}$, see \cite[Corollary 19]{nuwi}. The authors then write
that one {\it can see this function as a vectorial version of a bent-negabent function}.

We here clearly distinguish these two concepts in \cite{nuwi} respectively in \cite{kppp}:
We call a vectorial function $F$ a vectorial negabent
function if every component of $F$ is a negabent function. On the other hand, a function which satisfies
one (hence all) of the equivalent conditions in Proposition \ref{nuwires}, we call a vectorial bent$_4$
function. A vectorial bent-negabent function therefore refers to vectorial functions as dealt with in
Proposition \ref{kpppprop}. A function as given in \cite[Corollary 19]{nuwi} we then call vectorial
bent-bent$_4$.

We close this section with a comparison of these two concepts. \\[.3em]
{\bf Properties of vectorial negabent functions.}
\begin{itemize}
	\item[-] All component functions are negabent functions, i.e., all have a flat spectrum with respect
	to the same unitary transform, 
	$$\mathcal{V}_{g}(b) = \sum_{x\in\F_{2^n}}(-1)^{g(x)+ \sigma(1,x)}i^{\T^n_1(x)}(-1)^{\T_1^n(bx)}$$
	(univariate representation).
	\item[-] $F(x) = (f_1(x),f_2(x),\ldots,f_k(x))$ is vectorial negabent if and only if 
	the graph of every nontrivial linear combination of the functions $f_i$, $1 \le i\le k$, is a  relative difference set in the group $(\F_{2^n}\times\F_2,\star)$ with 
	$(x_1,y_1) \star (x_2,y_2) = (x_1+x_2,y_1+y_2 + \T^n_1(x_1x_2))$.
\end{itemize}
{\bf Properties of vectorial bent$_4$ functions.}
\begin{itemize}
	\item[-] $F$ itself has a flat spectrum with respect to the unitary transform $\mathcal{V}_F$ 
	in $(\ref{MCV})$. The graph of the vectorial function itself, is a relative difference set in a group 
	which is isomorphic to $\Z_2^{n-k}\times\Z_4^k$ (Proposition \ref{nuwires}(ii),(iii)).
	\item[-] Every component function of $F$ is a $c$-bent$_4$ function, hence its spectrum is flat with respect to a unitary transform $\mathcal{V}_f^c$, but $c$ varies with the components.
	The graph of every component function is a relative difference set in a group isomorphic to
	$\Z_2^{n-1}\times\Z_4$, but for every component function the group operation is defined individually.
\end{itemize}

%
%

\section{Negabentness for generalized Boolean functions}
\label{sec3}

The objective in this section is to extend the negabent concept to functions $f$ from $\V_n$ 
to the cyclic group $\Z_{2^k}$, which are called {\it generalized Boolean functions}.
For a generalized Boolean function, the character sum in Equation $(\ref{CS})$ is of the form
\[ \mathcal{H}_f(c,u) = \sum_{x\in\V_n}\zeta_{2^k}^{cf(x)}(-1)^{\langle u,x\rangle_n},
\quad \zeta_{2^k} = e^{2\pi\sqrt{-1}/2^k}. \]
A generalized Boolean function is hence a bent function, called a {\it $\Z_{2^k}$-bent function}, if 
$\mathcal{H}_f(c,u)$ has absolute value $2^{n/2}$ for all $u\in\V_n$ and nonzero $c\in\Z_{2^k}$.
The graph of $f$ is then a splitting relative difference set in $\V_n\times\Z_{2^k}$ relative
to $\{0\}\times\Z_{2^k}$.

Motivated by applications in code division multiple access (CDMA) systems, in \cite{kai},
K.-U. Schmidt initiated research on so-called {\it generalized bent functions (gbent functions)} 
which are functions from $\V_n$ to the cyclic group $\Z_{2^k}$ satisfying the weaker condition 
that $|\mathcal{H}_f(1,u)| = 2^{n/2}$ for all $u\in\V_n$.
Since then, one can observe increasing interest in generalized Boolean functions. For results
on generalized bent functions and on $\Z_{2^k}$-bent functions we refer to 
\cite{HMP2018,wm24,mtqwwf,txqf} and \cite{nuwi22,mepi}, and to Section 7 in the survey paper \cite{survey}.

In the following, we summarize some knowledge on gbent and $\Z_{2^k}$-bent functions
which will be useful in the further.
Let $f \colon \V_n\rightarrow\Z_{2^k}$ be a generalized Boolean function. Then, we can write $f$ uniquely as
\begin{equation*}
	f(x) = a_0(x) + 2a_1(x) + \cdots + 2^{k-1}a_{k-1}(x),
\end{equation*}
for some Boolean functions $a_i$, $0\le i\le k-1$, from $\V_n$ to $\F_2$.
An efficient characterization of gbent functions is given in \cite{HMP2018} as follows. 
\begin{proposition}
	\label{cor:dualiff}
	Let $f \colon \V_n\rightarrow\Z_{2^k}$ be given as 
	$f(x) = a_0(x) + 2a_1(x) + \cdots + 2^{k-1}a_{k-1}(x)$ for even $n$, and let $\mathcal{A}$ be the affine space of Boolean functions
	\[ \mathcal{A} = a_{k-1} + \langle a_0,a_1,\ldots,a_{k-2}\rangle. \]
	Then, $f$ is gbent if and only if $\mathcal{A}$ is an affine space of  bent functions such that for any three bent functions $g_0,g_1,g_2 \in \mathcal{A}$ we have $(g_0+g_1+g_2)^* = g^*_0+g^*_1+g^*_2$, where $g^*$ denotes the dual of a bent function $g$.
	Equivalently, if $g_0+g_1+g_2 = g_3$, then $g_0^*+g_1^*+g_2^* = g_3^*$.
\end{proposition}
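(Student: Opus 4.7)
The plan is to expand $\mathcal{H}_f(1,u)$ as a $\C$-linear combination of the ordinary Walsh transforms of the $2^{k-1}$ Boolean functions in $\mathcal{A}$, and then extract both the bentness of every $g\in\mathcal{A}$ and the triple-dual identity from the scalar condition $|\mathcal{H}_f(1,u)| = 2^{n/2}$. Concretely, using $\zeta_{2^k}^{2^{k-1}a_{k-1}(x)} = (-1)^{a_{k-1}(x)}$ together with
\[ \zeta_{2^k}^{2^i a_i(x)} = \alpha_i + \beta_i(-1)^{a_i(x)},\qquad \alpha_i = \tfrac{1+\zeta_{2^k}^{2^i}}{2},\quad \beta_i = \tfrac{1-\zeta_{2^k}^{2^i}}{2},\quad 0\le i\le k-2, \]
which is valid because $a_i(x)\in\{0,1\}$, expanding the product gives
\[ \zeta_{2^k}^{f(x)} = \sum_{S\subseteq\{0,\ldots,k-2\}} c_S(-1)^{g_S(x)},\qquad g_S := a_{k-1} + \sum_{i\in S}a_i, \]
with $c_S = \prod_{i\notin S}\alpha_i\prod_{i\in S}\beta_i$; the functions $g_S$ are exactly the elements of $\mathcal{A}$, and taking the Walsh transform term-by-term yields $\mathcal{H}_f(1,u) = \sum_S c_S\,\mathcal{W}_{g_S}(u)$.

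For the ``if'' direction I would first reformulate the triple-dual condition: it is equivalent to the existence of Boolean functions $\ell_0,\ldots,\ell_{k-2}$ with $g_S^* = g_\emptyset^* + \sum_{i\in S}\ell_i$ for every $S$, obtained by specializing one of the three summands in the triple identity to a reference element $g_\emptyset\in\mathcal{A}$. Substituting $\mathcal{W}_{g_S}(u) = 2^{n/2}(-1)^{g_S^*(u)}$ and using the key identity $\alpha_i + \beta_i(-1)^{\ell_i(u)} = \zeta_{2^k}^{2^i\ell_i(u)}$, the sum collapses into a product
\[ \mathcal{H}_f(1,u) = 2^{n/2}(-1)^{g_\emptyset^*(u)}\prod_{i=0}^{k-2}\zeta_{2^k}^{2^i\ell_i(u)}, \]
whose modulus is manifestly $2^{n/2}$, so $f$ is gbent.

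The converse is the main obstacle, since only the scalar identity $|\mathcal{H}_f(1,u)| = 2^{n/2}$ is available. My plan is to square this equation and decompose the resulting relation $\sum_{S,T} c_S\overline{c_T}\,\mathcal{W}_{g_S}(u)\mathcal{W}_{g_T}(u) = 2^n$ over a $\mathbb{Q}$-basis of $\mathbb{Q}(\zeta_{2^k})$; because the coefficients $c_S\overline{c_T}$ are algebraic and each $\mathcal{W}_{g_S}(u)\in\Z$, this produces a system of Diophantine identities indexed by the $\mathrm{Gal}(\mathbb{Q}(\zeta_{2^k})/\mathbb{Q})$-components. Coupling these pointwise constraints with the Parseval identity $\sum_u\mathcal{W}_{g_S}(u)^2 = 2^{2n}$ and the hypothesis that $n$ is even (so $2^{n/2}\in\Z$), the aim is to force $|\mathcal{W}_{g_S}(u)| = 2^{n/2}$ for every $S$ and every $u$, proving that each $g_S\in\mathcal{A}$ is bent; the residual sign relations coming from the non-rational Galois components then translate exactly into the dual identity. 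The technical heart is this sums-of-squares step: for small $k$ it is a short enumeration showing that the only integer solutions to the joint system are the ``balanced'' vectors $(\pm 2^{n/2},\ldots,\pm 2^{n/2})$ with the prescribed sign-balance, while for general $k$ one can either invoke a Galois-theoretic norm argument or proceed by induction on $k$ via the splitting $\mathcal{H}_f(1,u) = \alpha_0\mathcal{H}_h(1,u) + \beta_0\mathcal{H}_{h'}(1,u)$ with $h = a_1 + 2a_2 + \cdots + 2^{k-2}a_{k-1}$ and $h' = h + 2^{k-2}a_0$ regarded as functions $\V_n\to\Z_{2^{k-1}}$.
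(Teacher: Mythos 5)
The paper does not actually prove this proposition: it is quoted from \cite{HMP2018}, so your attempt has to be measured against the proof given there. Your decomposition $\zeta_{2^k}^{f(x)}=\sum_S c_S(-1)^{g_S(x)}$, hence $\mathcal{H}_f(1,u)=\sum_S c_S\,\mathcal{W}_{g_S}(u)$, is exactly the standard starting point of that proof, and your ``if'' direction is correct and essentially complete: the reduction of the triple-dual condition to $g_S^*=g_\emptyset^*+\sum_{i\in S}\ell_i$ follows by induction on $|S|$, and re-collapsing $\sum_S c_S\prod_{i\in S}(-1)^{\ell_i(u)}$ into $\prod_i\bigl(\alpha_i+\beta_i(-1)^{\ell_i(u)}\bigr)=\prod_i\zeta_{2^k}^{2^i\ell_i(u)}$ gives $|\mathcal{H}_f(1,u)|=2^{n/2}$ at once.

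The converse, however, is where the entire difficulty of the theorem lives, and there your proposal is a plan rather than a proof: you name three alternative strategies (enumeration for small $k$, a Galois-theoretic norm argument, induction on $k$) without carrying out any of them. The step you yourself call the ``technical heart'' --- that the system of quadratic relations obtained by expanding $|\sum_S c_S\mathcal{W}_{g_S}(u)|^2=2^n$ over a $\mathbb{Q}$-basis of $\mathbb{Q}(\zeta_{2^k})$ forces $|\mathcal{W}_{g_S}(u)|=2^{n/2}$ for every $S$ --- is precisely the hard lemma of \cite{HMP2018} and \cite{txqf}, usually packaged as the regularity statement $\mathcal{H}_f(1,u)\in 2^{n/2}\{\zeta_{2^k}^j\}$, and it occupies several pages there. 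It really is a short enumeration only for $k=2$, where the rational component reads $\mathcal{W}_{g_\emptyset}(u)^2+\mathcal{W}_{g_{\{0\}}}(u)^2=2^{n+1}$ and the only integer representations of $2^{n+1}$ as a sum of two squares are $(\pm 2^{n/2})^2+(\pm 2^{n/2})^2$. Already for $k=3$ the rational component $\sum_S\mathcal{W}_{g_S}(u)^2=2^{n+2}$ admits spurious solutions such as $(2^{(n+2)/2},0,0,0)$, so the non-rational Galois components must be exploited and the argument becomes a genuine induction; note also that Parseval, which you invoke, plays no role in the known proofs, which are pointwise in $u$. Finally, the passage from ``the residual sign relations'' to the dual identity $g_0^*+g_1^*+g_2^*=g_3^*$ is likewise asserted rather than derived. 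In short: the first half is correct and standard, but the second half has a genuine gap at exactly the step that makes the theorem nontrivial.
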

In \cite{HMP2018}, a similar characterization of gbent functions from $\V_n$ to $\Z_{2^k}$, when $n$  is odd, is given in terms of Boolean semibent functions.

It is quite easily observed that a generalized Boolean function from $\V_n$ to $\Z_{2^k}$
is $\Z_{2^k}$-bent if and only if $2^tf$ is gbent for all $0\le t\le k-1$, see also \cite{HMP2018}.
Note that this also implies that for a $\Z_{2^k}$-bent function 
$f(x) = a_0(x) + 2a_1(x) + \cdots + 2^{k-1}a_{k-1}(x)$, the Boolean function $a_0$
must be bent. Hence for odd $n$, $\Z_{2^k}$ bent functions from $\F_{2^n}$ to $\Z_{2^k}$
do not exist.

There are several approaches to construct gbent functions, also with affine bent spaces $\mathcal{A}$
of a large dimension, see \cite{wm24}. $\Z_{2^k}$-bent functions can be obtained with the very well-known  spread construction. Recently, in connection with the development of the concept of a bent partition, $\Z_{2^k}$-bent function constructions have been presented which are different from the spread  construction, see \cite{akm22a,akm23,akm22,nuwi22}. We recall the definition of a (normal) bent 
partition for the case of characteristic two:
\begin{definition} 
	\label{bepadef}
	Let $K$ be an even integer.
	\begin{itemize}
		\item[(i)] A partition $\Omega =\{A_1,\ldots, A_K\}$ of $\V_n$ is called a \textit{bent partition} of $\V_n$ of 
		depth $K$, if every Boolean function $f$ from $\V_n$ to $\F_2$, of which the support of $f$,
		$supp(f) = \{x\in\V_n\; :\,f(x) = 1\}$ is the union of exactly $K/2$ of the sets $A_j$ in $\Omega$, 
		is a Boolean bent function.
		\item[(ii)] A partition $\Omega =\{U, A_1,\ldots,A_K\}$ of $\V_n$ is called a \textit{normal bent partition} 
		of $\V_n$ of depth $K$, if every function with the following properties is bent:
		\begin{itemize}
			\item[I)] $f$ is constant on $U$ and on the sets $A_1,\ldots, A_K$.
			\item[II)] The support of $f$ contains exactly $K/2$ of the sets $A_j$ in $\Omega$.
		\end{itemize}
	\end{itemize}
\end{definition}
Clearly, every normal bent partition can be made a bent partition by taking the union of $U$ with 
any of the $A_i$. The canonical example of a bent partition is a spread of $\V_n$. In \cite{akm22a},
a large class of bent partitions is introduced, which can be seen as a generalization of a semifield
spread.
\begin{proposition}\cite[Theorem 6]{nuwi22}
	Let $\Omega = \{U,A_0,\ldots,A_{2^k-1}\}$ be a normal bent partition of $V_n$, then the function
	given by $f(x) = j$ if $x\in A_j$ and $f(x) = 0$ (w.l.o.g.) if $x\in U$, is a $\Z_{2^k}$-bent function. \end{proposition}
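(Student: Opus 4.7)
The plan is to verify $\Z_{2^k}$-bentness of $f$ through the equivalence (recalled just before Definition \ref{bepadef}) that $f\colon\V_n \to \Z_{2^k}$ is $\Z_{2^k}$-bent if and only if $2^t f$ is gbent for every $0\le t\le k-1$, and to apply Proposition \ref{cor:dualiff} to each $2^t f$. First I would write the binary decomposition $f = a_0 + 2a_1 + \cdots + 2^{k-1}a_{k-1}$: on $A_j$ the Boolean function $a_i$ equals the $i$-th bit $b_{j,i}$ of $j$, and on $U$ every $a_i$ vanishes. The central observation is that, for any nonempty $T\subseteq\{0,\ldots,k-1\}$, the $\F_2$-linear combination $g_T := \sum_{i\in T}a_i$ is constant on $U$ (value $0$) and on each $A_j$ (value $\sum_{i\in T} b_{j,i}\bmod 2$), and its support is the union of \emph{exactly} $2^{k-1}$ of the sets $A_j$, because the nontrivial $\F_2$-linear functional $j\mapsto\sum_{i\in T}b_{j,i}$ on $\{0,\ldots,2^k-1\}$ is balanced. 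Definition \ref{bepadef}(ii) then forces every such $g_T$ to be a Boolean bent function.

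Next, for each $t\in\{0,\ldots,k-1\}$, the binary decomposition of $2^t f\pmod{2^k}$ has zero bits in positions $0,\ldots,t-1$ and the bits $a_0,a_1,\ldots,a_{k-1-t}$ in positions $t,t+1,\ldots,k-1$. Applying Proposition \ref{cor:dualiff}, gbentness of $2^t f$ amounts to showing that the affine space $\mathcal{A}_t := a_{k-1-t} + \langle a_0,\ldots,a_{k-2-t}\rangle$ (a)~consists entirely of bent functions, and (b)~satisfies the dual-compatibility condition: $g_0+g_1+g_2=g_3$ in $\mathcal{A}_t$ implies $g_0^*+g_1^*+g_2^*=g_3^*$. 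Requirement (a) is immediate from the previous paragraph, since every element of $\mathcal{A}_t$ is a $g_T$ with $T\ni k-1-t$, hence $T$ is nonempty.

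The main obstacle is requirement (b). My strategy is to express each dual explicitly via the Fourier transforms $\chi_j(u) := \widehat{\mathbf{1}_{A_j}}(u)$ of the partition blocks. Using $(-1)^{g_T(x)} = 1 - 2\sum_{j\in S(T)}\mathbf{1}_{A_j}(x)$ where $S(T) := \{j : \sum_{i\in T}b_{j,i}\equiv 1\pmod 2\}$, one gets for $u\ne 0$ the identity $\mathcal{W}_{g_T}(u) = -2\sum_{j\in S(T)}\chi_j(u) = (-1)^{g_T^*(u)}2^{n/2}$. Since parities add under $\F_2$-sum, one has the set-theoretic identity $S(T_0)\triangle S(T_1)\triangle S(T_2) = S(T_0\triangle T_1\triangle T_2)$, so that the required sign identity in (b) becomes a combinatorial identity among the quantities $\sum_{j\in S(T)}\chi_j(u)$ that can be tracked by careful inclusion-exclusion on the indicator Fourier transforms. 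This is the computational heart of the argument, capturing precisely why a normal bent partition yields the stronger $\Z_{2^k}$-bent condition rather than merely Boolean bent $\F_2$-components. Once (b) is verified for every $t$, Proposition \ref{cor:dualiff} yields gbentness of $2^t f$ for all $0\le t\le k-1$, and hence $f$ is $\Z_{2^k}$-bent.
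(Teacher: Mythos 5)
The paper does not prove this proposition --- it is quoted from \cite[Theorem 6]{nuwi22} --- so I am judging your argument against the standard proof of that result. Your skeleton is reasonable and its first half is correct: reducing $\Z_{2^k}$-bentness to gbentness of $2^tf$ for all $t$, writing $f=a_0+2a_1+\cdots+2^{k-1}a_{k-1}$ with $a_i$ the $i$-th bit of $j$ on $A_j$ and $0$ on $U$, noting that every nonzero combination $g_T=\sum_{i\in T}a_i$ vanishes on $U$ and has support equal to a union of exactly $2^{k-1}$ blocks (a nontrivial linear functional on $\F_2^k$ is balanced), and concluding from Definition \ref{bepadef}(ii) that every element of each $\mathcal{A}_t$ is bent.

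The gap is condition (b), which you correctly flag as the heart of the matter but do not actually prove. The identity $S(T_0)\triangle S(T_1)\triangle S(T_2)=S(T_0\triangle T_1\triangle T_2)$ together with $\mathcal{W}_{g_T}(u)=-2\sum_{j\in S(T)}\chi_j(u)=\pm 2^{n/2}$ is \emph{not} enough: what you need is that the product of the signs of $\sum_{j\in S(T_\ell)}\chi_j(u)$, $\ell=0,1,2$, equals the sign of $\sum_{j\in S(T_3)}\chi_j(u)$, and this does not follow formally from the symmetric-difference relation between the index sets. The missing ingredient is the structural lemma underlying bent partitions: fix $u\neq 0$; since \emph{every} half-size union of blocks supports a bent function, every sum $\sum_{j\in S}\chi_j(u)$ over $|S|=2^{k-1}$ equals $\pm 2^{n/2-1}$; comparing two such sets differing by a single swap gives $\chi_a(u)-\chi_b(u)\in\{0,\pm 2^{n/2}\}$, and a short counting argument then forces the $\chi_j(u)$ to be all equal, or all equal except at a single index $j_0(u)$ where the value deviates by $\pm 2^{n/2}$. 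Only with this in hand can one compute the dual explicitly, $g_T^*(u)=\sum_{i\in T}b_{j_0(u),i}$ up to a constant independent of $T$, which is $\F_2$-linear in $T$ and hence yields (b). Moreover, once that lemma is established you no longer need Proposition \ref{cor:dualiff} at all: the shorter and standard route is to evaluate $\mathcal{H}_f(c,u)=\chi_U(u)+\sum_{j}\zeta_{2^k}^{cj}\chi_j(u)$ directly, using $\sum_{j=0}^{2^k-1}\zeta_{2^k}^{cj}=0$ for $c\neq 0$, which immediately gives $|\mathcal{H}_f(c,u)|=2^{n/2}$. As written, your proof is incomplete at precisely the step that distinguishes a normal bent partition from an arbitrary collection of bent functions.
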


To extend the negabent concept to generalized Boolean functions, we first specify an 
appropriate generalization of the transform $\mathcal{V}_f$ in $(\ref{VTF})$ for 
generalized Boolean functions.

\subsection{Nega-$\Z_{2^k}$-Hadamard transform}

First recall that a Boolean function $f$ from $\F_{2^n}$ to $\F_2$ is negabent if and
only if one (and hence all) of the following equivalent conditions is satisfied.
\begin{itemize}
	\item[-] The graph $\mathcal{G}_f$ of $f$ forms a $(2^n,2,2^n,2^{n-1})$-relative difference set in
	the group $G=(\mathbb{F}_{2^n}\times\mathbb{F}_{2}, +_\star)$ relative to $\{0\} \times \mathbb{F}_{2}$, where 
	\begin{equation}
		\label{eq:operation}
		(x_1,y_1) +_\star (x_2,y_2) = (x_1+x_2,y_1+y_2+\Tr^n_1(x_1x_2)).
	\end{equation} 
	\item[-] $f(x+y)+f(y)+\mathrm{Tr}^n_1(xy)$ is balanced for all nonzero $y\in \mathbb{F}_{2^n}$.
	\item[-] $f$ has a flat spectrum with respect to the unitary transform $\mathcal{V}_f^1$.	
\end{itemize}

Based in the following lemma, similarly as for the Walsh transform for bent functions, 
the nega-Hadamard transform $\mathcal{V}_f^1$ is obtained from the character group of 
$G$ in which the graph of $f$ is a relative difference set.
\begin{lemma}\cite{APO95}
	\label{alex95}	
	A subset $R$ of cardinality $\kappa$ of a group $G$ of order $\mu \nu$ with a subgroup $N$ of order 
	$\nu$ is a $(\mu, \nu , \kappa, \lambda)$-RDS of $G$ relative to $N$ if and only if for every character 
	$\chi$ of $G$, we have
	\begin{align*}
		|\chi(R)|^2 = \left\{ 
		\begin{array}{ll}
			\kappa^2	& \text{if } \chi=\chi_0, \text{ i.e., }  \chi(g)=1 \text{ for all } g\in G; \\
			\kappa- \lambda \nu &   \text{if } \chi\neq \chi_0 \text{ and }\chi(g)=1 \text{ for all } g\in N ;\\
			\kappa	&  \text{otherwise. } 
		\end{array}
		\right.
	\end{align*}
\end{lemma}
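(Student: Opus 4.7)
The plan is to reduce the character identity to the defining group-ring equation of a relative difference set, which is the standard route for such equivalences.

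First I would rewrite the RDS property as an identity in the integral group algebra $\mathbb{Z}[G]$. Identifying a subset with its formal sum, $R$ is a $(\mu,\nu,\kappa,\lambda)$-RDS of $G$ relative to $N$ if and only if
\[ R \cdot R^{(-1)} \;=\; \kappa \cdot 1_G + \lambda\,(G - N) \]
in $\mathbb{Z}[G]$, where $R^{(-1)} = \sum_{r\in R} r^{-1}$. This is simply a rephrasing of the counting condition: every nonidentity element outside $N$ has exactly $\lambda$ representations as $r_1 r_2^{-1}$ with $r_1,r_2\in R$, and every nonidentity element of $N$ has none.

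Next I would apply an arbitrary character $\chi$ of $G$, extended linearly to a ring homomorphism $\mathbb{Z}[G]\to\mathbb{C}$. Since $\chi(R^{(-1)}) = \overline{\chi(R)}$, the left-hand side becomes $|\chi(R)|^2$, and the right-hand side becomes $\kappa + \lambda(\chi(G) - \chi(N))$. A routine orthogonality check gives $\chi(G) = \mu\nu$ if $\chi=\chi_0$ and $0$ otherwise, while $\chi(N) = \nu$ if $\chi$ is trivial on $N$ and $0$ otherwise. Substituting these into the three cases (a) $\chi=\chi_0$, (b) $\chi\neq\chi_0$ with $\chi|_N=1$, (c) $\chi|_N\not\equiv 1$, yields exactly the values $\kappa^2$, $\kappa - \lambda\nu$, and $\kappa$ listed in the lemma, proving the forward implication.

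For the converse, suppose the three-case identity on $|\chi(R)|^2$ holds. Because $G$ is abelian in all applications of interest (in particular $(\mathbb{F}_{2^n}\times\mathbb{F}_2,+_\star)$ defined in~\eqref{eq:operation} is abelian, since $\mathbb{F}_{2^n}$ is commutative), its characters form an orthonormal basis of the dual and hence separate elements of $\mathbb{C}[G]$. Reading the forward computation in reverse shows $\chi(R\cdot R^{(-1)}) = \chi(\kappa\cdot 1_G + \lambda(G-N))$ for every character $\chi$, so the two group-algebra elements coincide, and $R$ is an RDS. The only subtlety worth flagging is the abelianness assumption, which governs the Fourier-inversion step in the converse; beyond this, the argument is pure orthogonality bookkeeping and presents no real obstacle.
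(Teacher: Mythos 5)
The paper does not prove this lemma at all --- it is quoted from Pott's book \cite{APO95}, where the proof is precisely the group-ring-plus-characters argument you give, so your approach matches the source and is correct. The one step worth spelling out is the principal-character case of the converse: there $\chi_0(R\cdot R^{(-1)})=\kappa^2$ must be matched against $\chi_0(\kappa\cdot 1_G+\lambda(G-N))=\kappa+\lambda\nu(\mu-1)$, and the required identity $\kappa(\kappa-1)=\lambda\nu(\mu-1)$ is not among your hypotheses but follows by summing the assumed values of $|\chi(R)|^2$ over all characters and comparing with the orthogonality identity $\sum_{\chi}|\chi(R)|^2=|G|\,\kappa$.
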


In order to extend the negabent concept to generalized Boolean functions, 
we first specify the appropriate generalization of the group $G$ defined by
(\ref{eq:operation}) above.

Let $G=( \mathbb{F}_{2^n}\times \mathbb{Z}_{2^k}, +_\star)$ be defined by
\begin{align}\label{eq:operation2}
	(x_1,y_1)  +_\star (x_2,y_2)=(x_1+x_2, y_1+y_2+2^{k-1}\mathrm{Tr}^n_1(x_1x_2)).
\end{align}
Apparently, Equation \eqref{eq:operation2} defines a group $G$ with 
identity $(0,0)$, and the inverse of $(x,y)\in G$ is given by 
$-_\star (x,y)=(x,-y+2^{k-1}\mathrm{Tr}^n_1(x))$.
Note that for $k=1$ the operation in Equation \eqref{eq:operation2} reduces to the 
operation given in Equation \eqref{eq:operation}. 
\begin{proposition}
	For $k>1$, the group $G = (\mathbb{F}_{2^n}\times \mathbb{Z}_{2^k}, +_\star)$ with $+_\star$
	given as in Equation \eqref{eq:operation2} is isomorphic to
	$\mathbb{Z}_{2}^{n} \times \mathbb{Z}_{2^{k}}$.
\end{proposition}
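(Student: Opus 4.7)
The plan is to construct an explicit isomorphism $\varphi\colon G\to\F_{2^n}\times\Z_{2^k}$ (the target with the standard direct product operation) of the form $\varphi(x,y)=(x,y-\sigma(x))$ for an appropriate $\sigma\colon\F_{2^n}\to\Z_{2^k}$. Such a $\varphi$ is automatically a bijection, and a direct computation shows it is a group homomorphism exactly when $\sigma$ satisfies the cocycle equation
\[\sigma(x_1+x_2)-\sigma(x_1)-\sigma(x_2)\equiv 2^{k-1}\T^n_1(x_1x_2)\pmod{2^k}.\]
Thus everything reduces to producing such a $\sigma$; since $(\F_{2^n},+)\cong\Z_2^n$, this will yield $G\simeq\Z_2^n\times\Z_{2^k}$.

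My candidate is
\[\sigma(x):=2^{k-1}\,\sigma(1,x)+2^{k-2}\,\T^n_1(x)\in\Z_{2^k},\]
where $\sigma(1,x)=\sum_{0\le i<j\le n-1}x^{2^i+2^j}$ is the Boolean function already introduced in the paper, and both summands are read as integers in $\{0,1\}$ before being lifted to $\Z_{2^k}$. The correction $2^{k-2}\T^n_1(x)$ is well-defined exactly because $k\ge 2$. To verify the cocycle equation I would split into two contributions. For the quadratic part, a direct expansion using $\sum_{i,j}x_1^{2^i}x_2^{2^j}=\T^n_1(x_1)\T^n_1(x_2)$ and splitting off the diagonal $i=j$ gives $\sigma(1,x_1+x_2)+\sigma(1,x_1)+\sigma(1,x_2)=\T^n_1(x_1)\T^n_1(x_2)+\T^n_1(x_1x_2)$ in $\F_2$, so multiplying by $2^{k-1}$ and using $2\cdot 2^{k-1}\equiv 0\pmod{2^k}$ yields the contribution $2^{k-1}\T^n_1(x_1)\T^n_1(x_2)+2^{k-1}\T^n_1(x_1x_2)$ in $\Z_{2^k}$. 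For the linear correction, the integer identity $\T^n_1(x_1+x_2)=\T^n_1(x_1)+\T^n_1(x_2)-2\T^n_1(x_1)\T^n_1(x_2)$ (the carry of XOR versus integer addition on $\{0,1\}$), multiplied by $2^{k-2}$, contributes $-2^{k-1}\T^n_1(x_1)\T^n_1(x_2)\equiv 2^{k-1}\T^n_1(x_1)\T^n_1(x_2)\pmod{2^k}$. Adding the two contributions, the $\T^n_1(x_1)\T^n_1(x_2)$ pieces combine to $2\cdot 2^{k-1}\T^n_1(x_1)\T^n_1(x_2)\equiv 0$, leaving exactly $2^{k-1}\T^n_1(x_1x_2)$, as required.

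The main obstacle is finding the correction term and recognizing why the hypothesis $k\ge 2$ is essential. The element $2^{k-2}\in\Z_{2^k}$ has order $4$ and exists only for $k\ge 2$; this is the ``extra room'' that absorbs the carry coming from $\T^n_1$. In the forbidden case $k=1$, any $\F_2$-valued $\sigma$ satisfying the cocycle equation would be a Boolean quadratic function whose associated symmetric bilinear Gram form $B(x_1,x_2)=\sigma(x_1+x_2)+\sigma(x_1)+\sigma(x_2)$ equals $\T^n_1(x_1x_2)$; but then $B(x,x)$ would simultaneously be the constant $\sigma(0)$ and the non-constant form $\T^n_1(x)$, a contradiction. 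This is consistent with the $k=1$ group being $\F_2^{n-1}\times\Z_4$ rather than $\Z_2^{n+1}$, as recalled earlier in the paper.
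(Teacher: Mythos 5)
Your proof is correct, and it takes a genuinely different route from the paper. The paper argues structurally: it shows every element of $G$ has order at most $2^k$, exhibits $(0,1)$ of order exactly $2^k$, counts the elements of order at most $2$ (there are $2^{n+1}$ of them, split evenly between $\T^n_1(x)=0$ and $\T^n_1(x)=1$), and then pins down the isomorphism type from the structure theorem for finite abelian groups via $|G|=2^{n+k}$. You instead trivialize the $2$-cocycle $2^{k-1}\T^n_1(x_1x_2)$ explicitly, writing down $\varphi(x,y)=(x,\,y-2^{k-1}\sigma(1,x)-2^{k-2}\T^n_1(x))$; your verification is sound — the identity for $\sigma(1,x_1+x_2)$ you derive is exactly Lemma 6 of the paper (the case $c=1$ of \cite[Lemma 5]{nuwi}), and the two $2^{k-1}\T^n_1(x_1)\T^n_1(x_2)$ terms from the quadratic part and from the integer-lift of the trace do cancel modulo $2^k$. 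Each approach buys something: the paper's counting argument is shorter and avoids any explicit formula, while your construction produces a concrete isomorphism (useful for transporting characters or the relative difference set $G_f$ to the standard direct product, and closely related to the shift $f\mapsto f+2^{k-1}\sigma(1,x)+2^{k-2}\T^n_1(x)$ that reappears in Theorems 14 and 18), and it isolates exactly where $k\ge 2$ enters, namely the availability of the order-$4$ element $2^{k-2}$ absorbing the carry of $\T^n_1$; your obstruction argument for $k=1$ correctly matches the paper's remark that $G\cong\Z_2^{n-1}\times\Z_4$ in that case. Only cosmetic caveat: rename your correction map to avoid overloading $\sigma$, which the paper reserves for the Boolean function $\sigma(c,x)$.
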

\begin{proof}
	As the group $G$ has order $2^{n+k}$,
	we conclude that any element has order $2^\ell$ for some non-negative integer $\ell$.
	Note that we have the following equalities.
	\begin{align}\label{eq:order}
		2^{\ell}(x,y) = \left\{ 
		\begin{array}{ll}
			(x,y)	&  \text{if } \ell=0; \\
			(0, 2y+2^{k-1}\mathrm{Tr}^n_1(x))	&   \text{if } \ell=1;\\
			(0, 2^{\ell}y)	&  \text{if } \ell \geq 2.
		\end{array}
		\right.
	\end{align}
	By Equation \eqref{eq:order}, for $k>1$, we observe that an element has order at most $2^k$. Moreover, the element $(0,1)$ has order $2^k$. Consequently, $\mathbb{Z}_{2^k}$ is a subgroup of $G$. 
	
	Now we count the number of elements of order at most $2$. 
	An element $(x,y)$ has order at most $2$ if and only if $2y+2^{k-1}\mathrm{Tr}^n_1(x)\equiv 0 \mod 2^k$. If $\mathrm{Tr}^n_1(x)=0$, then $2y\equiv 0 \mod 2^k$, which applies if and only if $y\equiv 0 \mod 2^k$ or $y\equiv 2^{k-1} \mod 2^k$. Therefore, there are $2^{n} $ elements $(x,y)$ of order at most $2$ for which $\mathrm{Tr}^n_1(x)=0$. If $\mathrm{Tr}^n_1(x)=1$, then $2y+2^{k-1}\equiv 0 \mod 2^k$, i.e., $y\equiv 2^{k-2} \mod 2^{k-1}$. 
	This applies if and only if $y\equiv 2^{k-2} \mod 2^k$ or $y\equiv 2^{k-1}+2^{k-2} \mod 2^k$. Hence, there are $2^{n} $ elements $(x,y)$ of order at most $2$ for which $\mathrm{Tr}^n_1(x)=1$. We conclude that the
	number of elements of order at most $2$ is $2^{n+1}$, which implies that $G$ contains an isomorphic copy of $\mathbb{Z}_{2}^{n+1}$. Then, the cardinality of $G$ implies that $G=\mathbb{Z}_{2}^{n} \times \mathbb{Z}_{2^{k}}$.
\qed \end{proof}
\begin{remark} If $k=1$, then in $G = (\mathbb{F}_{2^n}\times \mathbb{Z}_{2^k}, +_\star)$  
	there exists an element $(x,y) \in G$ of order $2^{k+1}$. More precisely, the order of $(x,0) \in G$, 
	with $\mathrm{Tr}^n_1(x)=1$, is $4$, see Equation \eqref{eq:order}.
	Consequently, for the case of Boolean negabent functions, i.e., for $k=1$, the situation is different. 
	As it is well-known, for $k=1$, we have $G \cong \mathbb{Z}_2^{n-1} \times \mathbb{Z}_4$.
\end{remark}

In order to apply Lemma \ref{alex95}, we next characterize the character group $\chi_G$ of $G$. 
We will use the following property of the Boolean function $\sigma(c,x)$.
\begin{lemma}\cite[Lemma 5]{nuwi}
	\label{lem:sigma}
	For $c,x_1,x_2 \in \mathbb{F}_{2^n} $, we have
	\begin{align*}
		\sigma(c,x_1+x_2)=	\sigma(c,x_1)+\sigma(c,x_2)+\mathrm{Tr}^n_1(cx_1)\mathrm{Tr}^n_1(cx_2)+\mathrm{Tr}^n_1(c^2x_1x_2) .
	\end{align*}
\end{lemma}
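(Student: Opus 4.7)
The plan is to expand $\sigma(c,x_1+x_2)$ directly from its definition, relying on the fact that the Frobenius map $y \mapsto y^{2^i}$ is additive in characteristic two. Writing $y_k := cx_k$ for $k=1,2$, so that $c(x_1+x_2) = y_1+y_2$, each factor in the defining sum becomes
\[
(y_1+y_2)^{2^i}(y_1+y_2)^{2^j} = y_1^{2^i}y_1^{2^j} + y_2^{2^i}y_2^{2^j} + y_1^{2^i}y_2^{2^j} + y_2^{2^i}y_1^{2^j}.
\]
Summing this over $0 \le i < j \le n-1$, the first two terms on the right reassemble into $\sigma(c,x_1) + \sigma(c,x_2)$, leaving the cross sum
\[
S \;=\; \sum_{0\le i<j\le n-1}\bigl(y_1^{2^i}y_2^{2^j} + y_2^{2^i}y_1^{2^j}\bigr).
\]

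Next I would recognize that $S$ runs over all \emph{ordered} pairs $(i,j)$ with $i\ne j$, since swapping the roles of the two summands in each unordered pair accounts for both orderings. Thus
\[
S \;=\; \sum_{\substack{0\le i,j\le n-1\\ i\ne j}} y_1^{2^i}y_2^{2^j} \;=\; \Bigl(\sum_{i=0}^{n-1}y_1^{2^i}\Bigr)\Bigl(\sum_{j=0}^{n-1}y_2^{2^j}\Bigr) \;-\; \sum_{i=0}^{n-1}(y_1y_2)^{2^i}.
\]
Each of the three sums is (up to sign) an absolute trace: the first two give $\T^n_1(cx_1)\,\T^n_1(cx_2)$, and the last is $\T^n_1(c^2 x_1 x_2)$. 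Substituting back and noting that subtraction is the same as addition in characteristic two yields the claimed identity.

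There is essentially no hard obstacle here; the proof is a direct calculation from the definition of $\sigma$. The only step requiring a little care is the bookkeeping that converts the symmetric sum $S$ over unordered pairs $i<j$ into a sum over ordered pairs $i\ne j$, and then into a product of traces minus a diagonal trace. Everything else is Frobenius linearity and the standard identification $\sum_{i=0}^{n-1}z^{2^i} = \T^n_1(z)$.
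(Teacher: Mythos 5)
Your proof is correct. Note that the paper does not prove this lemma at all --- it is imported verbatim as \cite[Lemma 5]{nuwi} --- so there is no in-paper argument to compare against; your direct computation (Frobenius additivity to split off $\sigma(c,x_1)+\sigma(c,x_2)$, then converting the symmetric cross sum over unordered pairs into the full ordered sum $\sum_{i\ne j}$, which factors as $\Tr^n_1(cx_1)\Tr^n_1(cx_2)$ minus the diagonal $\Tr^n_1(c^2x_1x_2)$) is exactly the standard verification and is complete as outlined.
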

Let $c_0+c_12+ \cdots +c_{k-1}2^{k-1}$, $c_i\in \mathbb{Z}_{2}$, be the base $2$ representation
of an element $c\in \mathbb{Z}_{2^{k}}$. Then, we define $\sigma(c,x)$ for $c\in\Z_{2^k}$ to be
the Boolean function $\sigma(c_0,x)$, i.e.,
\[ \sigma(c,x) = \left\{\begin{array}{l@{\quad \quad}l}
	\sigma(1,x) & \mbox{if $c$ is odd;} \\
	0 & \mbox{if $c$ is even.}
\end{array}
\right. \]

%
\begin{proposition}
	\label{C-group}
	Let $G=( \mathbb{F}_{2^n}\times \mathbb{Z}_{2^k}, +_\star)$ be the group defined by
	\begin{align*}
		(x_1,y_1)  +_\star (x_2,y_2)=(x_1+x_2, y_1+y_2+2^{k-1}\mathrm{Tr}^n_1(x_1x_2)).
	\end{align*}
	The group $\chi_G$ of characters of $G$ is then $\chi_G=\{ \chi_{u,c} \; : \; u\in\mathbb{F}_{2^n}, c\in \mathbb{Z}_{2^k} \}$, where
	\begin{align}\label{eq:charac}
		\chi_{u,c}(x,y) =(-1)^{\mathrm{Tr}^n_1(ux)+ \sigma (c,x)}\zeta_{2^k}^{cy} i^{\mathrm{Tr}^n_1(c_0x)},
		\quad \zeta_{2^k} = e^{2\pi\sqrt{-1}/2^k},
	\end{align}
	$i$ is a $4$-th root of unity, and $c\equiv c_0 \bmod 2$.
\end{proposition}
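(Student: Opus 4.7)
The plan is to verify directly that each prescribed map $\chi_{u,c}$ is a group homomorphism from $(G,+_\star)$ to $\mathbb{C}^\ast$, then observe that different pairs $(u,c)$ give different characters, and finally use a cardinality count. Since $|G| = 2^{n+k}$ and the set of proposed characters is parametrized by $(u,c) \in \mathbb{F}_{2^n} \times \mathbb{Z}_{2^k}$, once the first two points are established, we automatically obtain all characters.

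The main technical step is checking the homomorphism property
\[ \chi_{u,c}\bigl((x_1,y_1) +_\star (x_2,y_2)\bigr) \;=\; \chi_{u,c}(x_1,y_1)\,\chi_{u,c}(x_2,y_2). \]
The $(-1)^{\mathrm{Tr}^n_1(u\cdot)}$ and $\zeta_{2^k}^{c\cdot}$ parts are $\mathbb{F}_2$- respectively $\mathbb{Z}_{2^k}$-linear in their inputs, so the only real work is to show that the remaining factor
\[ (-1)^{\sigma(c,x_1+x_2)}\,\zeta_{2^k}^{c\cdot 2^{k-1}\mathrm{Tr}^n_1(x_1x_2)}\,i^{\mathrm{Tr}^n_1(c_0(x_1+x_2))} \]
coincides with $(-1)^{\sigma(c,x_1)+\sigma(c,x_2)}\,i^{\mathrm{Tr}^n_1(c_0x_1)+\mathrm{Tr}^n_1(c_0x_2)}$. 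For this I would bring in three observations. First, $\zeta_{2^k}^{c\cdot 2^{k-1}} = (-1)^{c} = (-1)^{c_0}$, so the middle factor on the left becomes $(-1)^{c_0\mathrm{Tr}^n_1(x_1x_2)}$. Second, Lemma~\ref{lem:sigma} applied with $c$ replaced by $c_0\in\{0,1\}$ (using $c_0^2 = c_0$) gives
\[ \sigma(c,x_1+x_2) \;=\; \sigma(c,x_1) + \sigma(c,x_2) + \mathrm{Tr}^n_1(c_0 x_1)\mathrm{Tr}^n_1(c_0 x_2) + c_0\mathrm{Tr}^n_1(x_1x_2) \pmod 2. \]
Third, for integers $a,b\in\{0,1\}$ one has $a+b = (a\oplus b) + 2ab$, hence
\[ i^{\mathrm{Tr}^n_1(c_0 x_1) + \mathrm{Tr}^n_1(c_0 x_2)} \;=\; i^{\mathrm{Tr}^n_1(c_0(x_1+x_2))}\,(-1)^{\mathrm{Tr}^n_1(c_0x_1)\mathrm{Tr}^n_1(c_0x_2)}. \]
Substituting these three identities, the cross-terms $c_0\mathrm{Tr}^n_1(x_1x_2)$ and $\mathrm{Tr}^n_1(c_0x_1)\mathrm{Tr}^n_1(c_0x_2)$ cancel between the two sides, which is precisely what is needed.

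For distinctness, suppose $\chi_{u,c} = \chi_{u',c'}$. Evaluating at $(0,1)$ gives $\zeta_{2^k}^{c} = \zeta_{2^k}^{c'}$, so $c = c'$; then $\chi_{u,c}(x,0)/\chi_{u',c}(x,0) = (-1)^{\mathrm{Tr}^n_1((u+u')x)} = 1$ for every $x\in\mathbb{F}_{2^n}$ forces $u = u'$. Counting, we obtain $2^n\cdot 2^k = |G|$ distinct characters, which exhausts $\chi_G$.

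The only conceptual obstacle is bookkeeping: the three "correction" terms (from the twisted group operation, from the $\sigma$-identity, and from the exponent rule for $i^{a+b}$ versus $i^{a\oplus b}$) must balance exactly. Once these three are lined up as above, the verification is mechanical, and the remaining counting argument is immediate from the previous proposition identifying $G \cong \mathbb{Z}_2^n \times \mathbb{Z}_{2^k}$.
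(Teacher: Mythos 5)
Your proposal is correct and follows essentially the same route as the paper: the same three identities (the evaluation $\zeta_{2^k}^{c2^{k-1}}=(-1)^{c_0}$, Lemma~\ref{lem:sigma} specialized via $c_0^2=c_0$, and the mod-$4$ relation between $i^{a+b}$ and $i^{\mathrm{Tr}^n_1(c_0(x_1+x_2))}$) are exactly what the paper uses to verify the homomorphism property, and the paper likewise concludes by noting the $2^{n+k}$ maps $\chi_{u,c}$ are pairwise distinct and hence exhaust the character group.
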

\begin{proof} We first show that $\chi_{u,c} \colon  G \mapsto \mathbb{C}$ given by Equation \eqref{eq:charac} is a group homomorphism. For $u\in  \mathbb{F}_{2^n}, c\in \mathbb{Z}_{2^k}$ and $(x_1,y_1), (x_2,y_2)\in \mathbb{F}_{2^n}\times \mathbb{Z}_{2^k} $,
	we have	
	\begin{align}\label{eq:1chi}
		&\chi_{u,c}((x_1,y_1)  +_\star (x_2,y_2)) \\ \nonumber
		=&		\chi_{u,c}(x_1+x_2, y_1+y_2+2^{k-1}\mathrm{Tr}^n_1(x_1x_2)) \\  \nonumber
		=&	(-1)^{\mathrm{Tr}^n_1(u(x_1+x_2))+ \sigma (c,x_1+x_2)}\zeta_{2^k}^{c( y_1+y_2+2^{k-1}\mathrm{Tr}^n_1(x_1x_2))} i^{\mathrm{Tr}^n_1(c_0(x_1+x_2))}.
	\end{align}
	On the other hand, we have
	\begin{align}\label{eq:2chi}
		&\chi_{u,c}(x_1,y_1) \chi_{u,c}(x_2,y_2)  \\ \nonumber
		=&	(-1)^{\mathrm{Tr}^n_1(u(x_1+x_2))+ \sigma (c,x_1)+ \sigma (c,x_2)}\zeta_{2^k}^{c(y_1+y_2)} i^{\mathrm{Tr}^n_1(c_0x_1)+\mathrm{Tr}^n_1(c_0x_2)}.
	\end{align}
	Equations \eqref{eq:1chi} and \eqref{eq:2chi} are equal if and only if 
	\begin{align}\label{eq:3chi}
		\zeta_{2^k}^{c2^{k-1}\mathrm{Tr}^n_1(x_1x_2)}	i^{2\sigma (c,x_1+x_2)  + \mathrm{Tr}^n_1(c_0(x_1+x_2))}
		=i^{ 2\sigma (c,x_1) +2\sigma (c,x_2) + \mathrm{Tr}^n_1(c_0x_1)+ \mathrm{Tr}^n_1(c_0x_2)}.
	\end{align}
	Note that we have 
	\begin{align*}
		\zeta_{2^k}^{c2^{k-1}\mathrm{Tr}^n_1(x_1x_2)}=(-1)^{c_0\mathrm{Tr}^n_1(x_1x_2)}=(-1)^{\mathrm{Tr}^n_1(c_0x_1x_2)}.
	\end{align*}
	Hence, Equation \eqref{eq:3chi} holds if and only if 
	\begin{align*}
		i^{2\sigma (c,x_1+x_2)  +2 \mathrm{Tr}^n_1(c_0x_1x_2) + \mathrm{Tr}^n_1(c_0(x_1+x_2))}
		=i^{ 2\sigma (c,x_1) +2\sigma (c,x_2) + \mathrm{Tr}^n_1(c_0x_1)+ \mathrm{Tr}^n_1(c_0x_2)}.
	\end{align*}
	By Lemma \ref{lem:sigma}, we have 
	\begin{align}\label{eq:4chi}
		\sigma(c,x_1+x_2)=	\sigma(c,x_1)+\sigma(c,x_2)+\mathrm{Tr}^n_1(c_0x_1)\mathrm{Tr}^n_1(c_0x_2)+\mathrm{Tr}^n_1(c_0x_1x_2) .
	\end{align}
	Note that we used the fact that $c_0^2=c_0$, as $c_0\in \mathbb{F}_{2}$.
	Hence, by Equation \eqref{eq:4chi}, Equation \eqref{eq:3chi} holds if and only if
	\begin{align*}
		i^{2\mathrm{Tr}^n_1(c_0x_1) \mathrm{Tr}^n_1(c_0x_2) + \mathrm{Tr}^n_1(c_0(x_1+x_2))}
		=i^{ \mathrm{Tr}^n_1(c_0x_1) +\mathrm{Tr}^n_1(c_0x_2)}.
	\end{align*}
	We observe that 
	\begin{align} \label{eq:5}
		2\mathrm{Tr}^n_1(c_0x_1)\mathrm{Tr}^n_1(c_0x_2)+\mathrm{Tr}^n_1(c_0(x_1+x_2)) \equiv \mathrm{Tr}^n_1(c_0x_1)+ \mathrm{Tr}^n_1(c_0x_2) \mod 4 .
	\end{align}
	This gives the desired equality. 
	
	\noindent  Note that for $(u,c), (v,d)\in \mathbb{F}_{2^n}\times \mathbb{Z}_{2^k}$, we have $\chi_{u,c}=\chi_{v,d}$ if and only if $u=v$ and $c=d$. Hence, Equation \eqref{eq:charac} gives all characters of $G$.
\qed \end{proof}

With Proposition \ref{C-group}, we obtain a unitary transform which generalizes the nega-Hadamard transform $\mathcal{V}_f^1$ for Boolean functions. As in the introduction for negabent functions,
we define this new class of functions first in terms of the respective unitary transform.
\begin{definition}
	\label{negaZbe}
	A function $f$ from $\F_{2^n}$ to $\Z_{2^k}$ is called a \textit{nega-$\mathbb{Z}_{2^k}$-bent function} if
	\begin{align*}
		\mathcal{K}_f(c,u) = \sum_{x\in \mathbb{F}_{2^n} } (-1)^{\mathrm{Tr}^n_1(ux)+ \sigma (c,x)}\zeta_{2^k}^{cf(x)} i^{\mathrm{Tr}^n_1(c_0x)}
	\end{align*}	
	has absolute value $2^{n/2}$, for all $u \in \mathbb{F}_{2^n}$ and nonzero $c\in\mathbb{Z}_{2^k}$, where $c_0\equiv c \mod 2$.
\end{definition}

We now introduce a version of a modified derivative for functions from $\F_{2^n}$ to $\Z_{2^k}$,
which, as we will see, can be used alternatively to define nega-$\mathbb{Z}_{2^k}$-bent functions.
\begin{definition}
	Let $f \colon \mathbb{F}_{2^n}\mapsto \mathbb{Z}_{2^k}$, and $z\in \mathbb{F}_{2^n} $ be a nonzero element.  We define a \textit{modified derivative} of $f$ in the direction $z$ by
	\begin{align}
		\label{modide}
		D_zf(x)=f(x+z)-f(x)+2^{k-1}\mathrm{Tr}^n_1(zx).
	\end{align}
\end{definition}
%
%
\begin{remark}
	For $k=1$, $\mathcal{K}_f(c,u)$ reduces to the nega-Hadamard transform $\mathcal{V}_f^1$,
	the modified derivative in $(\ref{modide})$ reduces to the modified derivative associated
	with Boolean negabent functions.
\end{remark}

Similar as for many of the considered classes of functions, such as bent functions, $\Z_{p^k}$-bent
functions and negabent functions, we now intend to characterize nega-$\mathbb{Z}_{2^k}$-bent functions
in equivalent ways via a flat spectrum with respect to their transforms, as functions for which
a modified version of a derivative is always balanced, or as relative difference sets in
corresponding groups.

Let $G_f=\{ (x,f(x))\, : \, x\in \mathbb{F}_{2^n} \} \subseteq \mathbb{F}_{2^n} \times \mathbb{Z}_{2^k} $ be the graph of a function $f \colon \mathbb{F}_{2^n}\mapsto \mathbb{Z}_{2^k}$. Observe that for 
$(x,f(x)),(y,f(y))\in G_f$, we have 
\begin{align*} 
	(x,f(x))-_\star (y,f(y))=(x+y, f(x)-f(y)+2^{k-1}\mathrm{Tr}^n_1(xy)+2^{k-1}\mathrm{Tr}^n_1(y)).
\end{align*}
Setting $z=x+y$, i.e., $x=y+z$, and observing that
\begin{align*} 
	2^{k-1}\mathrm{Tr}^n_1((y+z)y)+2^{k-1}\mathrm{Tr}^n_1(y)=2^{k-1}\mathrm{Tr}^n_1(zy),
\end{align*}
we have
\begin{align*} 
	(x,f(x))-_\star (y,f(y))=(z,f(y+z)-f(y)+2^{k-1}\mathrm{Tr}^n_1(zy) ).
\end{align*}
We will use the following two lemmas.
\begin{lemma} \label{lem:h}
	\cite[Lemma 9]{nuwi}
	Let $h$ be a complex valued function on $\mathbb{F}_{2^n}$ and let
	$c = c_0 + c_12 + \cdots + c_{k-1}2^{k-1}\in \mathbb{Z}_{2^k}$. Then
	\begin{align*} 
		\Psi_h(u)=\sum_{z\in \mathbb{F}_{2^n} } h(z)
		(-1)^{\mathrm{Tr}^n_1(uz)+ \sigma (c,z)} i^{\mathrm{Tr}^n_1(c_0z)}=h(0),
	\end{align*}	
	for all $u\in \mathbb{F}_{2^n}$, if and only if $h(z)=0$ for all nonzero $z\in \mathbb{F}_{2^n}$.
\end{lemma}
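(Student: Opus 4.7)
The sufficiency direction is immediate: if $h(z)=0$ for every nonzero $z$, then only the $z=0$ term survives, and since $\sigma(c,0)=0$ and $\T^n_1(c_0\cdot 0)=0$, the sum collapses to $h(0)$ regardless of $u$.

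For the converse, the plan is to absorb the $u$-independent twist into a new auxiliary function and recognize $\Psi_h(u)$ as an ordinary Walsh--Hadamard transform. Concretely, I would define
\[ g(z) = h(z)\,(-1)^{\sigma(c,z)}\,i^{\T^n_1(c_0z)}, \]
so that
\[ \Psi_h(u) = \sum_{z\in\F_{2^n}} g(z)(-1)^{\T^n_1(uz)} = \widehat{g}(u), \]
the Walsh--Hadamard transform of $g$ over $\F_{2^n}$. Note that $g(0)=h(0)$.

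The hypothesis $\Psi_h(u)=h(0)$ for all $u\in\F_{2^n}$ therefore says that $\widehat{g}$ is a constant function equal to $h(0)$. Applying the standard Walsh inversion formula,
\[ g(z) = 2^{-n}\sum_{u\in\F_{2^n}}\widehat{g}(u)(-1)^{\T^n_1(uz)} = 2^{-n} h(0)\sum_{u\in\F_{2^n}}(-1)^{\T^n_1(uz)}, \]
and the inner sum equals $2^n$ when $z=0$ and vanishes otherwise. Hence $g(z)=0$ for every nonzero $z$.

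Finally, since the twisting factor $(-1)^{\sigma(c,z)} i^{\T^n_1(c_0z)}$ is a nonzero complex number (in fact of modulus $1$), the equality $g(z)=h(z)(-1)^{\sigma(c,z)}i^{\T^n_1(c_0z)}=0$ forces $h(z)=0$ for all nonzero $z$. There is no real obstacle here; the only subtlety is making sure the twist depends only on $z$ (not on $u$), which is clear from the definition, so that it can be pushed inside the Walsh transform without further analysis of the two cases $c$ even versus $c$ odd.
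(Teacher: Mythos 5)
Your proof is correct. Note that the paper itself does not prove this lemma at all --- it is quoted as Lemma~9 of the cited reference \cite{nuwi} --- so there is no in-paper argument to compare against; your route (absorbing the $u$-independent unimodular factor $(-1)^{\sigma(c,z)}i^{\mathrm{Tr}^n_1(c_0z)}$ into an auxiliary function $g$, recognizing $\Psi_h$ as the ordinary Walsh--Hadamard transform of $g$, and applying Fourier inversion on $(\mathbb{F}_{2^n},+)$) is the standard and essentially the intended argument, and all steps, including the final division by the modulus-one twist, are sound.
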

\begin{lemma}
	\label{balanced}
	Let $g$ be a function from $\F_{2^n}$ to $\Z_{2^k}$. Then 
	$\sum_{y\in\F_{2^n}}\zeta_{2^k}^{cg(y)} = 0$,
	for all nonzero $c\in\Z_{2^k}$, if and only if 
	$g$ is balanced.	
\end{lemma}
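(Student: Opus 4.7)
The plan is to reduce the statement to a standard discrete Fourier/orthogonality argument on the cyclic group $\Z_{2^k}$. For each $j\in\Z_{2^k}$, set $N_j=|\{y\in\F_{2^n}\,:\,g(y)=j\}|$. Then the sum in the statement rewrites cleanly as
\[ \sum_{y\in\F_{2^n}}\zeta_{2^k}^{cg(y)}=\sum_{j=0}^{2^k-1}N_j\,\zeta_{2^k}^{cj}, \]
so the hypothesis becomes a system of linear equations in the nonnegative integers $N_0,\ldots,N_{2^k-1}$, with the trivial equation $\sum_j N_j = 2^n$ always available from the case $c=0$.

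For the ``if'' direction, I would simply observe that if $g$ is balanced then $N_j=2^{n-k}$ for every $j$, and the sum becomes $2^{n-k}\sum_{j=0}^{2^k-1}\zeta_{2^k}^{cj}$, which vanishes for every nonzero $c\in\Z_{2^k}$ by the standard geometric-series/orthogonality formula on $\Z_{2^k}$.

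For the ``only if'' direction, I would apply Fourier inversion on $\Z_{2^k}$. The hypothesis says that the function $\widehat{N}(c):=\sum_j N_j\zeta_{2^k}^{cj}$ vanishes at every nonzero $c$, and $\widehat{N}(0)=2^n$. Inverting via the orthogonality relation
\[ \sum_{c\in\Z_{2^k}}\zeta_{2^k}^{-cj}\widehat{N}(c)=2^k N_j \]
forces $N_j=2^{n-k}$ uniformly in $j$, which is the definition of balancedness.

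There is essentially no main obstacle here beyond bookkeeping: the lemma is a direct instance of the inverse discrete Fourier transform on the cyclic group $\Z_{2^k}$ applied to the distribution of values of $g$. The only mild subtlety to mention is that the statement implicitly requires $n\geq k$ (otherwise $2^{n-k}$ is not an integer and no balanced $g$ exists), but this is the natural setting in which the lemma will be applied later.
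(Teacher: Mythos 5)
Your proof is correct, but it takes a genuinely different route from the paper's. You recover each frequency $N_j=|\{y:g(y)=j\}|$ by Fourier inversion on $\Z_{2^k}$: since $\widehat N(c)=\sum_j N_j\zeta_{2^k}^{cj}$ vanishes for $c\neq 0$ and equals $2^n$ at $c=0$, the orthogonality relation gives $2^kN_j=\sum_{c\in\Z_{2^k}}\zeta_{2^k}^{-cj}\widehat N(c)=2^n$. This is the standard fact that a map into a finite abelian group is balanced precisely when all nontrivial character sums vanish, and it needs nothing beyond the geometric series $\sum_{c=0}^{2^k-1}\zeta_{2^k}^{cm}=0$ for $m\not\equiv 0 \bmod 2^k$. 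The paper instead uses the linear independence of $\{1,\zeta_{2^k},\ldots,\zeta_{2^k}^{2^{k-1}-1}\}$ over $\mathbb{Q}$ (equivalently, $[\mathbb{Q}(\zeta_{2^k}):\mathbb{Q}]=2^{k-1}$): rewriting the sum as $\sum_{i=0}^{2^{k-1}-1}(B_i+(-1)^cB_{i+2^{k-1}})\zeta_{2^k}^{ci}$, the single value $c=1$ already forces $B_i=B_{i+2^{k-1}}$, and the even values of $c$ pass the problem to the reduction $\tilde g=g\bmod 2^{k-1}$, closing an induction on $k$. Your argument is shorter and generalizes verbatim to any finite abelian codomain; the paper's is more algebraic and makes visible that far fewer than all $2^k-1$ nonzero values of $c$ are really needed to conclude. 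Your side remark that the statement ``implicitly requires $n\ge k$'' is harmless but unnecessary: for $n<k$ both sides of the equivalence are false (your own inversion shows the character sums cannot all vanish, since $N_j=2^{n-k}$ would not be an integer), so the lemma holds vacuously in that range.
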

\begin{proof}
	It is easy to observe that if $g$ is balanced then  $\sum_{y\in\F_{2^n}}\zeta_{2^k}^{cg(y)} = 0$,
	for all nonzero $c\in\Z_{2^k}$. 
	
	We recall that $\{1, \zeta_{2^k}, \ldots , \zeta_{2^k}^{2^{k-1}-1}\}$ is a basis for $\mathbb{Q}(\zeta_{2^k})$ over $\mathbb{Q}$. That is, it is a linearly independent set over $\mathbb{Q}$. Set $B_i=|\{ y\in \F_{2^n} \, :\, g(y)=i\}|$, for $i=0, \ldots,2^k-1 $. Hence, by the fact that $\zeta_{2^k}^{2^{k-1}}=-1$, we have
	\begin{align} \label{eq:k}
		\sum_{y\in\F_{2^n}}\zeta_{2^k}^{cg(y)}&
		=\sum_{i=0}^{2^k-1}B_i\zeta_{2^k}^{ci}
		=\sum_{i=0}^{2^{k-1}-1}(B_i+(-1)^cB_{i+2^{k-1}})\zeta_{2^k}^{ci}.
	\end{align}
	Then the proof of the converse is by induction on $k$. The argument is straightforward for $k=1$. 
	%
We suppose that the argument is true for any integer $t<k$, where $k\geq 2$. We consider $g \colon \F_{2^n} \mapsto \Z_{2^k}$. For $c=1$, Equation \eqref{eq:k} together with the assumption implies that
\begin{align*}
	\sum_{y\in\F_{2^n}}\zeta_{2^k}^{g(y)}=\sum_{i=0}^{2^{k-1}-1}(B_i-B_{i+2^{k-1}})\zeta_{2^k}^{i} =0.
\end{align*}
Since $\{1, \zeta_{2^k}, \ldots , \zeta_{2^k}^{2^{k-1}-1}\}$ is linearly independent over $\mathbb{Q}$, we have $B_i=B_{i+2^{k-1}}$ for $i=0, \ldots, 2^{k-1}-1$. Hence, for $c=2d$, where $d\in \{1,\ldots,2^{k-1}-1 \}$,  Equation \eqref{eq:k} implies that
\begin{align}\label{eq:3}
	\sum_{y\in\F_{2^n}}\zeta_{2^k}^{2dg(y)}=	\sum_{i=0}^{2^{k-1}-1}2B_i\zeta_{2^k}^{2di} =0.
\end{align}
We now consider $\tilde{g} \colon \F_{2^n} \mapsto \Z_{2^{k-1}}$ defined by $\tilde{g}(y)=g(y) \mod 2^{k-1}$. For $i=0, \ldots,2^{k-1}-1$, we have $\tilde{g}(y)=i$ if and only if $g(y)=i$ or $i+2^{k-1}$. That is,
\begin{align*}
	\tilde{B}_i=|\{ y\in \F_{2^n} \, :\, \tilde{g}(y)=i\}|=B_i+B_{i+2^{k-1}} =2B_i
\end{align*}
for $i=0, \ldots,2^{k-1}-1 $. Then by Equation \eqref{eq:3},
\begin{align*}
	\sum_{y\in\F_{2^n}}\zeta_{2^{k-1}}^{d\tilde{g}(y)}=	\sum_{i=0}^{2^{k-1}-1}2B_i\zeta_{2^{k-1}}^{di} =0
\end{align*}
for any $d=1, \ldots,2^{k-1}-1$. By induction hypothesis, we then have 
$2B_0=2B_1= \cdots =2 B_{2^{k-1}-1}$. By the fact that $B_i=B_{i+2^{k-1}}$ for $i=0, \ldots, 2^{k-1}-1$, 
we obtain the desired equality.
\qed \end{proof}
\begin{theorem} \label{thm:negabent}
Let $f \colon \mathbb{F}_{2^n}\mapsto\mathbb{Z}_{2^k}$. Then the followings are equivalent.
\begin{itemize}
	\item[(i)]  $f$ is nega-$\mathbb{Z}_{2^k}$-bent.
	\item[(ii)] $f(y+z)-f(y)+2^{k-1}\mathrm{Tr}^n_1(zy)$ is balanced, for all nonzero $z\in \mathbb{F}_{2^n}$.
	\item[(iii)] The graph $G_f$ of $f$ forms a $(2^n,2^k,2^n,2^{n-k})$-relative difference set in 
	$G=( \mathbb{F}_{2^n}\times \mathbb{Z}_{2^k}, +_\star)$ relative to $\{0\} \times \mathbb{Z}_{2^k}$.
\end{itemize}
\end{theorem}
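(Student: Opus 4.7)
The plan is to prove the two equivalences (ii)$\,\Leftrightarrow\,$(iii) and (i)$\,\Leftrightarrow\,$(ii) separately. The first follows directly from the definition of a relative difference set, using the explicit description of $(x,f(x)) -_\star (y,f(y))$ that has already been worked out in the paragraph preceding Lemma~\ref{lem:h}. The second is a Plancherel-style identity for $|\mathcal{K}_f(c,u)|^2$ that isolates the modified derivative $D_zf$, after which Lemmas~\ref{lem:h} and~\ref{balanced} complete the argument.

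For (ii)$\,\Leftrightarrow\,$(iii), I would observe that for $x \neq y$ one has $(x,f(x)) -_\star (y,f(y)) = (z, D_zf(y))$ with $z = x+y \neq 0$, so no nonidentity element of $N = \{0\} \times \mathbb{Z}_{2^k}$ can arise as such a difference; this takes care of half of the RDS requirement automatically. The other condition --- that every element $(z,w) \in G$ with $z \neq 0$ is represented exactly $\lambda = 2^{n-k}$ times --- reduces, after fixing $z$, to counting $y$-preimages of $D_zf$, which is precisely balancedness of $D_zf$.

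For (i)$\,\Leftrightarrow\,$(ii) I would expand
\[ |\mathcal{K}_f(c,u)|^2 = \sum_{x,y \in \F_{2^n}} (-1)^{\Tr^n_1(u(x+y))+\sigma(c,x)+\sigma(c,y)} \zeta_{2^k}^{c(f(x)-f(y))} i^{\Tr^n_1(c_0 x) - \Tr^n_1(c_0 y)}, \]
substitute $x = y + z$, and simplify. By Lemma~\ref{lem:sigma} together with $c_0^2 = c_0$, the sigma cross term becomes $\sigma(c,z) + \Tr^n_1(c_0y)\Tr^n_1(c_0z) + \Tr^n_1(c_0yz)$. The integer identity $\Tr^n_1(c_0(y+z)) - \Tr^n_1(c_0y) = \Tr^n_1(c_0z) - 2\Tr^n_1(c_0y)\Tr^n_1(c_0z)$, used in the proof of Proposition~\ref{C-group}, rewrites the $i$-exponent so that $i^{\Tr^n_1(c_0 x) - \Tr^n_1(c_0 y)} = i^{\Tr^n_1(c_0 z)}(-1)^{\Tr^n_1(c_0 y)\Tr^n_1(c_0 z)}$. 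The two copies of $(-1)^{\Tr^n_1(c_0 y)\Tr^n_1(c_0 z)}$ cancel, and the remaining factor $(-1)^{\Tr^n_1(c_0 yz)} = \zeta_{2^k}^{c \cdot 2^{k-1} \Tr^n_1(yz)}$ (verified by a case split on the parity of $c$) can be absorbed into the $\zeta_{2^k}$-exponent, so that what appears in the $y$-sum is precisely $c D_z f(y)$. I thereby obtain
\[ |\mathcal{K}_f(c,u)|^2 = \sum_{z \in \F_{2^n}} h(z) (-1)^{\Tr^n_1(uz)+\sigma(c,z)} i^{\Tr^n_1(c_0 z)}, \quad h(z) = \sum_{y \in \F_{2^n}} \zeta_{2^k}^{c D_z f(y)}, \]
with $h(0) = 2^n$.

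Lemma~\ref{lem:h} then tells us that $|\mathcal{K}_f(c,u)|^2 = 2^n$ holds for every $u \in \F_{2^n}$ if and only if $h(z) = 0$ for every nonzero $z$, and by Lemma~\ref{balanced} (applied for each fixed nonzero $z$, letting $c$ range over the nonzero elements of $\mathbb{Z}_{2^k}$) the latter condition is equivalent to balancedness of $D_zf$ for every nonzero $z$. The main obstacle will be the bookkeeping for these phase cancellations, since the sigma expansion lives in $\mathbb{F}_2$ while the $i$-exponents live modulo $4$: the cross term $\Tr^n_1(c_0 y)\Tr^n_1(c_0 z)$ must be tracked across two different moduli, and the absorption of $(-1)^{\Tr^n_1(c_0 yz)}$ into the $\zeta_{2^k}$-power is exactly what forces the correction term $2^{k-1}\Tr^n_1(zy)$ of the modified derivative $D_zf$ to appear on the nose.
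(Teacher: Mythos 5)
Your proposal is correct, and half of it coincides with the paper's argument: the equivalence (i)$\,\Leftrightarrow\,$(ii) is established in the paper by exactly the Plancherel computation you describe --- expanding $|\mathcal{K}_f(c,u)|^2$, substituting $x=y+z$, using Lemma~\ref{lem:sigma} together with the mod-$4$ identity \eqref{eq:5} to cancel the cross term $\mathrm{Tr}^n_1(c_0y)\mathrm{Tr}^n_1(c_0z)$ and absorb $(-1)^{\mathrm{Tr}^n_1(c_0yz)}$ into the $\zeta_{2^k}$-exponent as $2^{k-1}c\,\mathrm{Tr}^n_1(yz)$, and then invoking Lemmas~\ref{lem:h} and~\ref{balanced}. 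Where you diverge is in how the third condition is attached: the paper closes the triangle by proving (i)$\,\Leftrightarrow\,$(iii) via the character-sum criterion of Lemma~\ref{alex95}, using $\chi_{u,c}(G_f)=\mathcal{K}_f(c,u)$, the fact that $\chi_{u,c}$ is nontrivial on $\{0\}\times\Z_{2^k}$ exactly when $c\ne 0$, and the values $\mathcal{K}_f(0,u)=0$ for $u\ne 0$, $\mathcal{K}_f(0,0)=2^n$; you instead prove (ii)$\,\Leftrightarrow\,$(iii) directly by counting ordered differences $(x,f(x))-_\star(y,f(y))=(z,D_zf(y))$, so that no nonidentity element of the forbidden subgroup occurs and the multiplicity of $(z,w)$ for $z\ne 0$ is $|\{y: D_zf(y)=w\}|$. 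Both routes are sound; yours is more elementary on that leg and bypasses Lemma~\ref{alex95} entirely (which is why the preparatory computation of $-_\star$ on the graph, carried out before Lemma~\ref{lem:h}, is exactly what you need), while the paper's route keeps all three conditions anchored to the transform $\mathcal{K}_f$ and gets the RDS equivalence essentially for free once the character group has been computed in Proposition~\ref{C-group}.
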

\begin{proof}
By the definition of the transform $\mathcal{K}_f$, we have $\chi_{u,c}(G_f) = \mathcal{K}_f(c,u)$.
Observe that $\chi_{u,c}$ is nontrivial on $\{0\}\times\Z_{2^k}$ if and only if $c\ne 0$. The equivalence of
(i) and (iii) follows then from Definition \ref{negaZbe}, Lemma \ref{alex95} and the observation that $\mathcal{K}_f(0,u) = 0$ if $u\ne 0$ and $\mathcal{K}_f(0,0) = 2^n$. \\
To show the equivalence of (i) and (ii), observe that
%
%
\begin{align*}
	|\mathcal{K}_f(c,u)|^2 & = \mathcal{K}_f(c,u)\overline{\mathcal{K}_f(c,u)} \\
	& =\left( \sum_{x\in \mathbb{F}_{2^n} } (-1)^{\mathrm{Tr}^n_1(ux)+ \sigma (c,x)}\zeta_{2^k}^{cf(x)} i^{\mathrm{Tr}^n_1(c_0x)}\right)\\
	&\qquad\qquad \qquad \times
	\left( \sum_{y\in \mathbb{F}_{2^n} } (-1)^{\mathrm{Tr}^n_1(uy)+ \sigma (c,y)}\zeta_{2^k}^{-cf(y)} i^{-\mathrm{Tr}^n_1(c_0y)}\right)\\
	&=\sum_{x,y\in \mathbb{F}_{2^n} } (-1)^{\mathrm{Tr}^n_1(u(x+y))+ \sigma (c,x)+\sigma (c,y)}\zeta_{2^k}^{c(f(x)-f(y))} i^{\mathrm{Tr}^n_1(c_0x)-\mathrm{Tr}^n_1(c_0y)} .
\end{align*}
Set $z=x+y$, i.e., $x=y+z$. Then, we have	
\begin{equation*}
\begin{split}
	&|\mathcal{K}_f(c,u)|^2 \\
	=&\sum_{y,z\in \mathbb{F}_{2^n} } (-1)^{\mathrm{Tr}^n_1(uz)+ \sigma (c,y+z)+\sigma (c,y)}\zeta_{2^k}^{c(f(y+z)-f(y))} i^{\mathrm{Tr}^n_1(c_0(y+z))-\mathrm{Tr}^n_1(c_0y)}.
\end{split}	
\end{equation*}
By Equations \eqref{eq:4chi} and \eqref{eq:5}, 
\begin{align*}
	|\mathcal{K}_f(c,u)|^2& =\sum_{y,z\in \mathbb{F}_{2^n} } (-1)^{\mathrm{Tr}^n_1(uz)+\sigma (c,z)}\zeta_{2^k}^{c(f(y+z)-f(y))} i^{2\mathrm{Tr}^n_1(c_0yz)+\mathrm{Tr}^n_1(c_0z)}\\
	& =\sum_{y,z\in \mathbb{F}_{2^n} } (-1)^{\mathrm{Tr}^n_1(uz)+\sigma (c,z)+\mathrm{Tr}^n_1(c_0yz) }\zeta_{2^k}^{c(f(y+z)-f(y))} i^{\mathrm{Tr}^n_1(c_0z)}.
\end{align*}	
Using that	
\begin{align*}
	(-1)^{\mathrm{Tr}^n_1(c_0yz) }=(-1)^{c_0\mathrm{Tr}^n_1(yz) }=\zeta_{2^k}^{2^{k-1}c_0\mathrm{Tr}^n_1(yz)}
	=\zeta_{2^k}^{2^{k-1}c\mathrm{Tr}^n_1(yz)},
\end{align*}
we obtain the following equalities.
\begin{align*}
	|\mathcal{K}_f(c,u)|^2&=\sum_{y,z\in \mathbb{F}_{2^n} } (-1)^{\mathrm{Tr}^n_1(uz)+\sigma (c,z)}\zeta_{2^k}^{c(f(y+z)-f(y)+2^{k-1}\mathrm{Tr}^n_1(yz))} i^{\mathrm{Tr}^n_1(c_0z)}\\
	&=\sum_{z\in \mathbb{F}_{2^n} }(-1)^{\mathrm{Tr}^n_1(uz)+\sigma (c,z)}
	i^{\mathrm{Tr}^n_1(c_0z)}
	\sum_{y\in \mathbb{F}_{2^n} } \zeta_{2^k}^{c(f(y+z)-f(y)+2^{k-1}\mathrm{Tr}^n_1(yz))}. 
\end{align*}
Hence, if $f(y+z)-f(y)+2^{k-1}\mathrm{Tr}^n_1(yz)$ is balanced, for all nonzero $z\in \mathbb{F}_{2^n}$, then $|\mathcal{K}_f(c,u)|^2=2^n$. 

Conversely, suppose that $|\mathcal{K}_f(c,u)|^2=2^n$ for all $u\in \mathbb{F}_{2^n}$. \\
Set $h(z)= \sum_{y\in \mathbb{F}_{2^n} } \zeta_{2^k}^{c(f(y+z)-f(y)+2^{k-1}\mathrm{Tr}^n_1(yz))} $. Note that $h(0)=2^n$. Then, we have 
\begin{align*}
	\Psi_h(u)=|\mathcal{K}_f(c,u)|^2=
	\sum_{z\in \mathbb{F}_{2^n} } h(z)
	(-1)^{\mathrm{Tr}^n_1(uz)+ \sigma (c,z)} i^{\mathrm{Tr}^n_1(c_0z)}=h(0),
\end{align*}
for all $u\in \mathbb{F}_{2^n}$. Hence, by Lemma \ref{lem:h}, we conclude that $h(z)=0$ for all
nonzero $z\in \mathbb{F}_{2^n}$. By Lemma \ref{balanced}, this holds if and only
if $f(y+z)-f(y)+2^{k-1}\mathrm{Tr}^n_1(yz)$ is balanced.
\qed \end{proof}

\begin{remark}
Let $G=( \mathbb{F}_{2^n}\times \mathbb{Z}_{2^k}, +_\star)$ be the group given in Theorem \ref{thm:negabent} and $N$ be the subgroup $N=\{0\} \times \mathbb{Z}_{2^k}$ of $G$. Let $x\in\mathbb{F}_{2^n}$ be an element such that $\mathrm{Tr}^n_1(x)=1$. Then 
$(x,0)$ is an element in the set $\F_{2^n}\times\{0\}$ for which $(x,0)+_\star(x,0)=(0, 2^{k-1})$ 
is in $N$. Hence, as for $k=1$, the forbidden subgroup $N$ of $G$ in Theorem \ref{thm:negabent} is non-splitting also for $k>1$, i.e., if $G$ is represented as 
$( \mathbb{F}_{2^n}\times \mathbb{Z}_{2^k}, +)$ with the conventional component-wise addition, then
the forbidden subgroup is not $\{0\}\times (\Z_{2^k},+)$, but a different subgroup of $G$ isomorphic
to $\Z_{2^k}$.
\end{remark}
\subsection{Nega-gbent and nega-$\Z_{2^k}$-bent functions}
In the previous section, we defined nega-$\Z_{2^k}$-bent functions $f \colon \F_{2^n}\rightarrow\Z_{2^k}$ as the functions for which $\mathcal{K}_f(c,u)$ has absolute value $2^{n/2}$, for all $u\in\V_n$ and nonzero $c\in\Z_{2^k}$.
As for gbent functions, we define a {\it nega-gbent function} as a function 
$f \colon \F_{2^n}\rightarrow\Z_{2^k}$ for which solely $|\mathcal{K}_f(1,u)| = 2^{n/2}$
holds. Note that this formal definition is satisfied at least by the trivial examples. If $g \colon \F_{2^n}\rightarrow\F_2$ is a Boolean negabent function,
then $f=2^{k-1}g$ satisfies $|\mathcal{K}_f(1,u)| = 2^{n/2}$ for all $u\in\F_{2^n}$. 

The main results in this section include a generalization of Fact \ref{fact} on the one-to-one correspondence between bent and negabent functions, to gbent and nega-gbent functions, and further to $\Z_{2^k}$-bent and nega-$\Z_{2^k}$-bent functions.
\begin{theorem}
\label{g-shift}
A function $f(x) = a_0(x) + 2a_1(x) + \cdots + 2^{k-2}a_{k-2}(x) + 2^{k-1}a_{k-1}(x)$ is a nega-gbent function from $\F_{2^n}$ to $\Z_{2^k}$, if and only if 
$g(x) = a_0(x) + 2a_1(x) + \cdots + 2^{k-2}b_{k-1}(x) + 2^{k-1}b_{k-1}(x)$, with 
\[ b_{k-2}(x) = a_{k-2}(x)+\Tr^n_1(x)\quad\mbox{and}\quad b_{k-1}(x) =  a_{k-1}(x)+\sigma(1,x), \] 
is gbent.
\end{theorem}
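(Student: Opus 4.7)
The plan is to rewrite $\mathcal{K}_f(1,u)$ as the ordinary gbent transform $\mathcal{H}_g(1,u)$ of a suitably modified function $g$. Since nega-gbentness of $f$ and gbentness of $g$ are both characterized by the absolute value of the respective transform being $2^{n/2}$ for every $u\in\F_{2^n}$, an exact identity of the two transforms will immediately yield the asserted equivalence. Conceptually this mirrors the classical shift relation between Boolean bent and negabent functions recorded in Fact \ref{fact}.

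First, I would use the identities $\zeta_{2^k}^{2^{k-1}}=-1$ and $\zeta_{2^k}^{2^{k-2}}=i$, valid for $k\geq 2$, to absorb the sign factor $(-1)^{\sigma(1,x)}$ and the quartic factor $i^{\Tr^n_1(x)}$ appearing in Definition \ref{negaZbe} into a single $2^k$-th root of unity. This yields
\begin{equation*}
\mathcal{K}_f(1,u)=\sum_{x\in\F_{2^n}}(-1)^{\Tr^n_1(ux)}\,\zeta_{2^k}^{\,f(x)+2^{k-2}\Tr^n_1(x)+2^{k-1}\sigma(1,x)},
\end{equation*}
the exponent being taken modulo $2^k$.

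Next, I would determine the base-$2$ decomposition of the exponent $f(x)+2^{k-2}\Tr^n_1(x)+2^{k-1}\sigma(1,x)\bmod 2^k$. Since the correction terms only touch bit positions $k-2$ and $k-1$, the bits $a_0(x),\ldots,a_{k-3}(x)$ of $f$ are preserved. The addition at bit $k-2$ produces the new bit $a_{k-2}(x)+\Tr^n_1(x)=b_{k-2}(x)$, and the carry it generates combines with $a_{k-1}(x)$ and $\sigma(1,x)$ at bit $k-1$ to give $b_{k-1}(x)$, with any further carry discarded modulo $2^k$. This identifies the exponent with $g(x)$ and gives $\mathcal{K}_f(1,u)=\mathcal{H}_g(1,u)$ for every $u\in\F_{2^n}$, whence $|\mathcal{K}_f(1,u)|=2^{n/2}$ for all $u$ if and only if $|\mathcal{H}_g(1,u)|=2^{n/2}$ for all $u$.

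The main technical obstacle is the carry bookkeeping at bit $k-2$: one has to verify that the contribution $a_{k-2}(x)\Tr^n_1(x)$ arising from the integer sum $a_{k-2}(x)+\Tr^n_1(x)$ propagates correctly into bit $k-1$ and merges cleanly with $a_{k-1}(x)+\sigma(1,x)$ to yield $b_{k-1}(x)$. Once this bit-level accounting is settled, the remainder of the argument is a purely formal comparison of the two character sums, requiring no further structural property of $f$.
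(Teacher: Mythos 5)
Your argument is correct and is essentially the paper's own proof: the whole content is the identity
$\mathcal{K}_f(1,u)=\sum_{x\in\F_{2^n}}(-1)^{\Tr^n_1(ux)}\zeta_{2^k}^{f(x)+2^{k-2}\Tr^n_1(x)+2^{k-1}\sigma(1,x)}=\mathcal{H}_g(1,u)$,
obtained exactly as you do from $\zeta_{2^k}^{2^{k-1}}=-1$ and $\zeta_{2^k}^{2^{k-2}}=i$, after which the equivalence is immediate. One remark on the step you single out as the main obstacle: the ``carry bookkeeping'' is not actually needed, because $g$ in the statement is to be read simply as the element $f(x)+2^{k-2}\Tr^n_1(x)+2^{k-1}\sigma(1,x)$ of $\Z_{2^k}$ (the formulas for $b_{k-2},b_{k-1}$ are shorthand for adding those two terms to the corresponding digits, with reduction mod $2^k$ doing the rest). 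Moreover, your resolution of the carry is not quite right as literally stated: if one insisted on writing $g$ with Boolean coefficient functions, the carry $a_{k-2}(x)\Tr^n_1(x)$ from position $k-2$ lands in position $k-1$ as an \emph{additional} summand, so the top digit would be $a_{k-1}(x)+\sigma(1,x)+a_{k-2}(x)\Tr^n_1(x)$ rather than $b_{k-1}(x)$. This does not affect the validity of your transform identity (which concerns the value of the exponent in $\Z_{2^k}$, not its binary digits), but the claim that the carry ``merges cleanly to yield $b_{k-1}$'' should be dropped or corrected.
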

\begin{proof} Observing that $\zeta_{2^k}^{2^{k-1}} = -1$ and $\zeta_{2^k}^{2^{k-2}} = i$, we can 
write $\mathcal{K}_f(u)=\mathcal{K}_f(1,u)$ as 
\begin{align*}
	\mathcal{K}_f(u) & = \sum_{x\in \mathbb{F}_{2^n} } (-1)^{\mathrm{Tr}^n_1(ux)+ \sigma(1,x)}
	\zeta_{2^k}^{f(x)} i^{\mathrm{Tr}^n_1(x)} \\
	& = \sum_{x\in \mathbb{F}_{2^n} } (-1)^{\mathrm{Tr}^n_1(ux)}
	\zeta_{2^k}^{a_0(x) + 2a_1(x) + \cdots + 2^{k-2}(a_{k-2}(x)+\Tr^n_1(x)) + 2^{k-1}(a_{k-1}(x)+\sigma(1,x))}\\
	&= \mathcal{H}_g(u),
\end{align*} 
which completes the proof.
\qed \end{proof} 
For showing the corresponding result for $\Z_{2^k}$-bent functions, we need some
preparations. 
\begin{lemma}
\label{123}
Let $f(x) = a_0(x) + a_1(x)2 + \cdots + a_{k-2}(x)2^{k-2} + a_{k-1}(x)2^{k-1}$ be a generalized
Boolean function from $\F_{2^n}$ to $\Z_{2^k}$. Then, $f$ is a nega-$\Z_{2^k}$-bent 
function if and only if the following holds:
\begin{itemize}
	\item[(1)] $g(x) = a_0(x) + a_1(x)2 + \cdots + (a_{k-2}(x)+\Tr^n_1(x))2^{k-2} + 
	(a_{k-1}(x)+\sigma(1,x))2^{k-1}$ is gbent,
	\item[(2)] $h(x) = a_0(x) + a_1(x)2 + \cdots + (a_{k-2}(x)+\Tr^n_1(x))2^{k-2} + 
	(a_{k-1}(x)+\sigma(1,x)+\Tr^n_1(x))2^{k-1}$ is gbent,
	\item[(3)] $2^jf(x)$ is gbent for all $j = 1, \ldots, k-1$, or equivalently
	\item[(3$^\prime$)] $\tilde{f}(x) = a_0(x) + a_1(x)2 + \cdots + a_{k-2}(x)2^{k-2}$ is
	$\Z_{2^{k-1}}$-bent.
\end{itemize}
\end{lemma}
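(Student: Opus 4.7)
The plan is to split the flatness condition $|\mathcal{K}_f(c,u)|=2^{n/2}$ by the parity of $c$, and for odd $c$ further by $c\bmod 4$. In each case $\mathcal{K}_f(c,u)$ will be rewritten as a Hadamard transform of one of $f$, $g$, $h$, so that the flatness requirements match up with (1), (2), (3).

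For even $c$ we have $c_0=0$, so $\sigma(c,x)=0$ and $i^{\Tr^n_1(c_0x)}=1$, whence $\mathcal{K}_f(c,u)=\mathcal{H}_f(c,u)$. Writing $c=2c''$ with $c''\in\Z_{2^{k-1}}\setminus\{0\}$ and using $\zeta_{2^k}^{2c''f(x)}=\zeta_{2^{k-1}}^{c''\tilde f(x)}$ turns this into the $\Z_{2^{k-1}}$-Hadamard transform $\mathcal{H}_{\tilde f}(c'',u)$. Hence $|\mathcal{K}_f(c,u)|=2^{n/2}$ for all even nonzero $c$ is precisely condition (3$'$); specialising to $c=2^j$ also proves the stated equivalence of (3) and (3$'$).

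For odd $c$ one has $c_0=1$ and $\sigma(c,x)=\sigma(1,x)$. Mimicking the proof of Theorem~\ref{g-shift} via $(-1)^{\sigma(1,x)}=\zeta_{2^k}^{2^{k-1}\sigma(1,x)}$ and $i^{\Tr^n_1(x)}=\zeta_{2^k}^{2^{k-2}\Tr^n_1(x)}$ gives
\begin{equation*}
\mathcal{K}_f(c,u)=\sum_{x\in\F_{2^n}}(-1)^{\Tr^n_1(ux)}\zeta_{2^k}^{cf(x)+2^{k-2}\Tr^n_1(x)+2^{k-1}\sigma(1,x)}=\mathcal{H}_{f_c}(c,u),
\end{equation*}
where $f_c(x)=f(x)+c^{-1}\bigl(2^{k-2}\Tr^n_1(x)+2^{k-1}\sigma(1,x)\bigr)\bmod 2^k$, since odd $c$ is a unit in $\Z_{2^k}$. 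The crucial numerical fact is that $c^{-1}\cdot 2^{k-1}\equiv 2^{k-1}\pmod{2^k}$ always, while $c^{-1}\cdot 2^{k-2}\equiv 2^{k-2}\pmod{2^k}$ if $c\equiv 1\pmod 4$ and $c^{-1}\cdot 2^{k-2}\equiv 2^{k-2}+2^{k-1}\pmod{2^k}$ if $c\equiv 3\pmod 4$. Therefore $f_c=g$ whenever $c\equiv 1\pmod 4$ and $f_c=h$ whenever $c\equiv 3\pmod 4$, so $\mathcal{K}_f(c,u)$ equals $\mathcal{H}_g(c,u)$ or $\mathcal{H}_h(c,u)$ accordingly.

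The forward implication is immediate: nega-$\Z_{2^k}$-bentness gives (3) via the even case and, specialising to $c=1$, gives (1); moreover (2) follows from (1) via the identity $\mathcal{H}_h(1,u)=\mathcal{H}_g(1,u+1)$ (using $h-g=2^{k-1}\Tr^n_1(x)$). For the converse, the subtle point is that (1) only gives $|\mathcal{H}_g(c,u)|=2^{n/2}$ at $c=1$, whereas we need this at every $c\equiv 1\pmod 4$; equivalently, $g$ must be $\Z_{2^k}$-bent. This is where (3) enters: from $g-f=2^{k-2}\Tr^n_1(x)+2^{k-1}\sigma(1,x)$ we obtain $2g\equiv 2f+2^{k-1}\Tr^n_1(x)\pmod{2^k}$ and $2^jg\equiv 2^jf\pmod{2^k}$ for $j\ge 2$, and via the shift $\mathcal{H}_{2g}(1,u)=\mathcal{H}_{2f}(1,u+1)$ condition (3) is equivalent to $2^jg$ being gbent for every $j=1,\ldots,k-1$. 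Together with (1), this upgrades $g$ to $\Z_{2^k}$-bent; the analogous argument using $h-g=2^{k-1}\Tr^n_1(x)$ (so $2^jh\equiv 2^jg\pmod{2^k}$ for $j\ge 1$) together with (2) handles $h$. The main obstacle is isolating the $c\bmod 4$ bifurcation that forces two companion functions $g$ and $h$ rather than one; the remainder is bookkeeping of shifts in the style of Theorem~\ref{g-shift}.
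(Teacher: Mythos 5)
Your proof is correct and follows essentially the same route as the paper: the same split of $\mathcal{K}_f(c,u)$ according to $c$ even, $c\equiv 1\pmod 4$, and $c\equiv 3\pmod 4$, with the same identification of $\mathcal{K}_f(c,\cdot)$ with $\mathcal{H}_g(c,\cdot)$, $\mathcal{H}_h(c,\cdot)$ and $\mathcal{H}_f(c,\cdot)$ respectively. The one spot where you diverge is in handling the quantifier over all odd $c$ in a fixed residue class: the paper asserts that $cg$ is gbent if and only if $g$ is gbent (an implicit Galois-conjugacy fact), whereas you combine (1) with (3) to upgrade $g$ (and likewise $h$) to $\Z_{2^k}$-bent, which makes that step self-contained and is equally valid.
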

\begin{proof}
Per definition, $f$ is nega-$\Z_{2^k}$-bent if and only if $|\mathcal{K}(c,u)|=2^{n/2}$,
for all nonzero $c\in\Z_{2^k}$ and $u\in\F_{2^n}$. \\
For odd $c$, we distinguish between $c\equiv 1\bmod 4$ and $c\equiv 3\bmod 4$. First suppose that
$c\equiv 1\bmod 4$. Then
\begin{align*}
	\mathcal{K}_f(c,u) & = \sum_{x\in\F_{2^n}}(-1)^{\Tr(ux)}\zeta_{2^k}^{cf(x) + 2^{k-1}\sigma(1,x)+
		2^{k-2}\Tr^n_1(x)} \\
	& = \sum_{x\in\F_{2^n}}(-1)^{\Tr(ux)}\zeta_{2^k}^{c(f(x) + 2^{k-1}\sigma(1,x)+2^{k-2}\Tr^n_1(x))}\\
	&= \mathcal{H}_g(c,u)=\mathcal{H}_{cg}(1,u), 
\end{align*}
where $g(x) = a_0(x) + a_1(x)2 + \cdots + (a_{k-2}(x)+\Tr^n_1(x))2^{k-2} + (a_{k-1}(x)+\sigma(1,x))2^{k-1}$. Therefore, $|\mathcal{K}_f(c,u)| = 2^{n/2}$ if and only if $cg$
is gbent, which applies if and only if $g$ is gbent. If $c\equiv 3\bmod 4$, then 
\begin{align*} 
	\mathcal{K}_f(c,u) & = \sum_{x\in\F_{2^n}}(-1)^{\Tr(ux)}\zeta_{2^k}^{cf(x) + 2^{k-1}\sigma(1,x)+
		2^{k-2}\Tr^n_1(x)} \\
	& = \sum_{x\in\F_{2^n}}(-1)^{\Tr(ux)}\zeta_{2^k}^{c(f(x) + 2^{k-1}\sigma(1,x)+32^{k-2}\Tr^n_1(x))} \\
	& = \sum_{x\in\F_{2^n}}(-1)^{\Tr(ux)}\zeta_{2^k}^{c(f(x) + 2^{k-1}(\sigma(1,x)+\Tr^n_1(x))
		+2^{k-2}\Tr^n_1(x))}\\
	& = \mathcal{H}_h(c,u)=\mathcal{H}_{ch}(1,u), 
\end{align*}
where $h(x) = a_0(x) + a_1(x)2 + \cdots + (a_{k-2}(x)+\Tr^n_1(x))2^{k-2} + 
(a_{k-1}(x)+\sigma(1,x)+\Tr^n_1(x))2^{k-1}$. Hence, $|\mathcal{K}_f(c,u)| = 2^{n/2}$ if and only if $ch$ is gbent; equivalently $h$ is gbent. \\
For even $c$, we have $\mathcal{K}_f(c,u) = \mathcal{H}_f(c,u)$, which finishes  the proof. 
\qed \end{proof}
%
The following lemma can be inferred from CCZ-equivalence for generalized Boolean functions, see \cite{ceme24}. 
We can also give a simple direct proof.
\begin{lemma}
\label{equiv}
Let $g(x) = a_0(x) + 2a_1(x) + \cdots + 2^{k-1}a_{k-1}(x)$ be a gbent function from $\F_{2^n}$ to 
$\Z_{2^k}$, then $g^\prime(x) = a_0(x) + 2a_1(x) + \cdots + 2^{k-1}(a_{k-1}(x)+\T^n_1(x))$ is also gbent.
\end{lemma}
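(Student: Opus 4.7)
The plan is to verify directly that $|\mathcal{H}_{g'}(1,u)| = 2^{n/2}$ for every $u\in\F_{2^n}$, by writing out the transform and shifting the character argument. Since $g'(x) = g(x) + 2^{k-1}\Tr^n_1(x)$ as an element of $\Z_{2^k}$, and since $\zeta_{2^k}^{2^{k-1}} = -1$, the factor $\zeta_{2^k}^{g'(x)}$ splits as $\zeta_{2^k}^{g(x)}\cdot(-1)^{\Tr^n_1(x)}$.

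Concretely, I would compute
\[
\mathcal{H}_{g'}(1,u) \;=\; \sum_{x\in\F_{2^n}} \zeta_{2^k}^{g(x) + 2^{k-1}\Tr^n_1(x)}(-1)^{\Tr^n_1(ux)}
\;=\; \sum_{x\in\F_{2^n}} \zeta_{2^k}^{g(x)}(-1)^{\Tr^n_1((u+1)x)} \;=\; \mathcal{H}_g(1,u+1).
\]
By the hypothesis that $g$ is gbent, $|\mathcal{H}_g(1,v)| = 2^{n/2}$ for every $v\in\F_{2^n}$; specializing to $v = u+1$ yields $|\mathcal{H}_{g'}(1,u)| = 2^{n/2}$, as desired.

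There is no real obstacle here: the whole argument is a one-line identity driven by $\zeta_{2^k}^{2^{k-1}} = -1$ and by absorbing the extra sign $(-1)^{\Tr^n_1(x)}$ into the Walsh-type character, which amounts to a translation $u\mapsto u+1$ in the frequency variable. The only thing worth emphasizing is that this translation trick is specific to the highest bit $2^{k-1}$ (so that the new term contributes a $\pm 1$ rather than a genuine $2^k$-th root of unity), which is exactly why the lemma is stated for modification of $a_{k-1}$ and not for lower-order coordinate functions.
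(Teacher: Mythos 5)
Your proof is correct, and it takes a genuinely different and more elementary route than the paper's. You reduce everything to the identity $g'(x)=g(x)+2^{k-1}\Tr^n_1(x)$ in $\Z_{2^k}$ (valid because the carry $2^{k}a_{k-1}(x)\Tr^n_1(x)$ vanishes modulo $2^k$ --- exactly the point you flag about this trick being specific to the top bit), whence $\zeta_{2^k}^{g'(x)}=\zeta_{2^k}^{g(x)}(-1)^{\Tr^n_1(x)}$ and $\mathcal{H}_{g'}(1,u)=\mathcal{H}_g(1,u+1)$; gbentness of $g'$ is then immediate from that of $g$. The paper argues instead through its characterization of gbent functions (Proposition \ref{cor:dualiff}): the affine space attached to $g'$ is $a_{k-1}+\Tr^n_1(x)+\langle a_0,\dots,a_{k-2}\rangle$, still an affine space of bent functions, and the dual condition survives because $(h(x)+\Tr^n_1(\alpha x))^*=h^*(x+\alpha)$. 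Your frequency-shift argument is shorter, avoids duals entirely, and applies for every $n$, whereas the characterization the paper invokes is stated only for even $n$; it also extends verbatim to $\mathcal{H}_{g'}(c,u)=\mathcal{H}_g(c,u+c_0)$ with $c_0\equiv c\bmod 2$, so the same shift preserves $\Z_{2^k}$-bentness and not merely gbentness. What the paper's route buys is an explicit record of how the duals of the component bent functions transform, which matches the dual-space machinery used throughout that section and reappears in the proof of Theorem \ref{k-shift}.
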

\begin{proof}
We show that $g^\prime$ satisfies the characterization in Proposition \ref{cor:dualiff},  as $g$ does.
Clearly with $\mathcal{A}_g = a_{k-1} + \langle a_0,\ldots, a_{k-2}\rangle$, also 
$\mathcal{A}_{g^\prime} = a_{k-1} + \Tr^n_1(x) + \langle a_0,\ldots, a_{k-2}\rangle$ is an affine
space of bent functions. Consider three bent functions from $\mathcal{A}_{g^\prime}$, which are then
of the form $g_j(x) + \Tr^n_1(x)$, where $g_j \in\mathcal{A}_g$, $j=0,1,2$.
Using the fact that for a bent function $h \colon \F_{2^n}\rightarrow\F_2$, $(h(x)+\Tr^n_1(\alpha x))^* = h^*(x+\alpha)$,
we have the following equalities.
\begin{align*}
	& (g_0(x)+\Tr^n_1(x)+g_1(x)+\Tr^n_1(x)+g_2(x)+\Tr^n_1(x))^*\\
	& \qquad \qquad =(g_0(x)+g_1(x)+g_2(x)+\Tr^n_1(x))^*\\
	& \qquad \qquad =(g_0+g_1+g_2)^*(x+1)\\
	& \qquad \qquad =  g_0^*(x+1)+g_1^*(x+1)+g_2^*(x+1) \\
	& \qquad \qquad =(g_0(x)+\Tr^n_1(\alpha x))^*+(g_1(x)+\Tr^n_1(\alpha x))^*+(g_2(x)+\Tr^n_1(\alpha x))^*,
\end{align*}
which completes the proof.
\qed \end{proof}
%
With Lemma \ref{123} and Lemma \ref{equiv}, we obtain the version of Theorem \ref{g-shift} for
nega-$\Z_{2^k}$-bent functions.
\begin{theorem}
\label{k-shift}
The function 
\[ f(x) = a_0(x) + 2a_1(x) + \cdots + 2^{k-3}a_{k-3}(x) + 2^{k-2}a_{k-2}(x) + 2^{k-1}a_{k-1}(x), \]
from $\F_{2^n}$ to $\Z_{2^k}$, is nega-$\Z_{2^k}$-bent if and only if 
\[ g(x) = a_0(x) + 2a_1(x) + \cdots + 2^{k-3}a_{k-3}(x) + 2^{k-2}b_{k-2}(x) + 2^{k-1}b_{k-1}(x), \]
with $b_{k-2}(x) = a_{k-2}(x)+\Tr^n_1(x)$ and $b_{k-1}(x) = a_{k-1}(x)+\sigma(1,x)$, is 
$\Z_{2^k}$-bent.
\end{theorem}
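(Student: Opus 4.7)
The plan is to apply Lemma~\ref{123} to rewrite nega-$\Z_{2^k}$-bentness as three separate conditions, and then show that together these conditions are exactly the $\Z_{2^k}$-bentness of $g$. The key additional ingredient is Lemma~\ref{equiv}, which will immediately dispose of one of the three conditions and will also be used to handle the shift that appears when multiplying $g$ by $2$ modulo $2^k$.

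First I would set up the bookkeeping. The function $g$ in the theorem statement coincides with the function $g$ of Lemma~\ref{123}(1), while the function $h$ of Lemma~\ref{123}(2) satisfies $h(x) = g(x) + 2^{k-1}\Tr^n_1(x)$. By Lemma~\ref{equiv}, $h$ is gbent if and only if $g$ is gbent, so condition (2) of Lemma~\ref{123} is automatic given condition (1). Thus $f$ is nega-$\Z_{2^k}$-bent if and only if $g$ is gbent \emph{and} $2^j f$ is gbent for every $1 \le j \le k-1$.

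Next I would compare these conditions with the $\Z_{2^k}$-bentness of $g$, which amounts to $2^t g$ being gbent for each $0 \le t \le k-1$. The case $t=0$ is exactly ``$g$ gbent''. For $t = 1$, the contribution of the $b_{k-1}$-term vanishes (since $2 \cdot 2^{k-1} \equiv 0 \pmod{2^k}$) and a direct reduction gives
\[ 2g(x) \equiv 2f(x) + 2^{k-1}\Tr^n_1(x) \pmod{2^k}, \]
so Lemma~\ref{equiv} yields that $2g$ is gbent iff $2f$ is gbent. For $t \ge 2$, both the $2^{k-2}b_{k-2}$ and $2^{k-1}b_{k-1}$ summands of $g$, and likewise the $2^{k-2}a_{k-2}$ and $2^{k-1}a_{k-1}$ summands of $f$, are annihilated by $2^t$ in $\Z_{2^k}$, so $2^t g \equiv 2^t f \pmod{2^k}$ and the corresponding equivalence is trivial.

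Assembling these equivalences, the condition ``$g$ gbent and $2^j f$ gbent for $1 \le j \le k-1$'' is precisely ``$2^t g$ gbent for $0 \le t \le k-1$'', i.e., $g$ is $\Z_{2^k}$-bent. The substantive content is already carried by Lemmas~\ref{123} and~\ref{equiv}; the only genuinely delicate step is the comparison at $t = 1$, where the top coefficient of $2g$ and $2f$ differ by a $\Tr^n_1$-term and one must invoke Lemma~\ref{equiv} rather than equality modulo $2^k$.
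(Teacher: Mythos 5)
Your proof is correct and follows essentially the same route as the paper: both rest on Lemma~\ref{123} together with Lemma~\ref{equiv} to absorb the $\Tr^n_1$-shifts in the top digit. The only cosmetic difference is that you dispatch condition (3) of Lemma~\ref{123} by comparing the digits of $2^t f$ and $2^t g$ directly for each $t$, whereas the paper phrases the same comparison through the truncated functions $\tilde f,\tilde g$ and their $\Z_{2^{k-1}}$-bentness.
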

\begin{proof}
First suppose that $g$ is a $\Z_{2^k}$-bent function. Then, $g$ is gbent and Condition (1) in 
Lemma \ref{123} is satisfied. 
By Lemma \ref{equiv}, with $g$, also
\[ h(x) = a_0(x) + a_1(x)2 + \cdots + (a_{k-2}(x)+\Tr^n_1(x))2^{k-2} + 
(a_{k-1}(x)+\sigma(1,x)+\Tr^n_1(x))2^{k-1}, \]
is gbent. Therefore also (2) in Lemma \ref{123} holds. It remains to show that (3$^\prime$),
\[ \tilde{f}(x) = a_0(x) + 2a_1(x) + \cdots + 2^{k-3}a_{k-3}(x) + 2^{k-2}a_{k-2}(x) \]
is a $\Z_{2^{k-1}}$-bent function. This follows from the assumption that 
\[ \tilde{g}(x) = a_0(x) + 2a_1(x) + \cdots + 2^{k-3}a_{k-3}(x) + 2^{k-2}(a_{k-2}(x)+\Tr^n_1(x)) \]
is $\Z_{2^{k-1}}$-bent, and Lemma \ref{equiv}. \\
Conversely, if $f$ is nega-$\Z_{2^k}$-bent, then $a_0(x) + 2a_1(x) + \cdots + 2^{k-3}a_{k-3}(x) + 2^{k-2}a_{k-2}(x)$ is $\Z_{2^{k-1}}$-bent, hence
$a_0(x) + 2a_1(x) + \cdots + 2^{k-3}a_{k-3}(x) + 2^{k-2}(a_{k-2}(x)+\Tr^n_1(x))$ is $\Z_{2^{k-1}}$-bent, by Lemma \ref{equiv}. By Theorem \ref{g-shift}, since $f$ is nega-gbent, $g$ is gbent, and therefore $\Z_{2^k}$-bent.
\qed \end{proof}
A large variety of $\Z_{2^k}$-bent functions from $\V_n$, $n=2m$, to $\Z_{2^k}$ can be obtained 
from bent partitions like spreads or generalized semifield spreads, see e.g.  \cite[Theorem 6]{nuwi22} and 
\cite[Theorem 1]{akm23}. Using Theorem \ref{k-shift}, we then obtain a large variety of corresponding 
nega-$\Z_{2^k}$-bent functions. Like bent functions, $\Z_{2^k}$-bent functions from $\V_n$ to $\Z_{2^k}$ 
cannot exist when $n$ is odd. That is, Condition (3$^\prime$) in Lemma \ref{123} cannot be satisfied 
when $n$ is odd and $k >1$. Hence, we have the following corollary.
\begin{corollary}
For $k >1$ and odd integers $n$, nega-$\Z_{2^k}$-bent functions from $\F_{2^n}$ to $\Z_{2^k}$ do not
exist.
\end{corollary}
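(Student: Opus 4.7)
The plan is to read off the result directly from Lemma \ref{123}, using the well-known nonexistence of $\Z_{2^{k-1}}$-bent functions (in particular of Boolean bent functions) in an odd number of variables.

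Suppose, for contradiction, that $f(x) = a_0(x) + 2a_1(x) + \cdots + 2^{k-1}a_{k-1}(x)$ is a nega-$\Z_{2^k}$-bent function from $\F_{2^n}$ to $\Z_{2^k}$ with $n$ odd and $k>1$. By Lemma \ref{123}, condition (3$^\prime$), the truncated function $\tilde{f}(x) = a_0(x) + 2a_1(x) + \cdots + 2^{k-2}a_{k-2}(x)$ must be $\Z_{2^{k-1}}$-bent. Since $k > 1$, $\tilde{f}$ is a nonconstant generalized Boolean function, and as already remarked after Proposition \ref{cor:dualiff}, any $\Z_{2^{k-1}}$-bent function on $\F_{2^n}$ forces its lowest binary coordinate function $a_0$ to be a Boolean bent function (one sees this e.g.\ by taking $c=2^{k-2}$ in $\mathcal{H}_{\tilde f}(c,u)$, which reduces to the Walsh transform of $a_0$). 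Since Boolean bent functions require an even number of variables, this is impossible for odd $n$, a contradiction.

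Equivalently, one can bypass Lemma \ref{123} and work directly: for any even $c \in \Z_{2^k}$ one has $c_0=0$ and $\sigma(c,x)=0$, so $\mathcal{K}_f(c,u) = \mathcal{H}_f(c,u)$. Choosing in particular $c = 2^{k-1}$ gives $\zeta_{2^k}^{2^{k-1} f(x)} = (-1)^{a_0(x)}$ and hence $\mathcal{K}_f(2^{k-1},u) = \mathcal{W}_{a_0}(u)$, so nega-$\Z_{2^k}$-bentness of $f$ forces $a_0$ to be Boolean bent, which again requires $n$ even. Either route yields the corollary immediately.

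There is essentially no obstacle here; the work has already been done in Lemma \ref{123}, and the only remaining input is the classical parity obstruction for Boolean bent functions. The corollary is therefore best presented as a one-line deduction from condition (3$^\prime$).
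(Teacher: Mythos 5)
Your proposal is correct and follows essentially the same route as the paper: the authors likewise justify the corollary by observing that Condition~(3$^\prime$) of Lemma~\ref{123} forces a $\Z_{2^{k-1}}$-bent function (hence a Boolean bent $a_0$), which cannot exist for odd $n$. Your alternative direct computation via $\mathcal{K}_f(2^{k-1},u)=\mathcal{W}_{a_0}(u)$ is a valid shortcut but does not change the substance of the argument.
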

As there are Boolean negabent functions $f \colon \V_n\rightarrow\F_2$ also for odd $n$, this is different
for $k=1$.

\section{Gbent and $\mathbb{Z}_{2^k}$-bent functions from permutations with the $(\mathcal{A}_m)$ property}
\label{sec4}

As remarked in the previous section, a huge quantity of $\Z_{2^k}$-bent functions (and hence by Theorem \ref{k-shift} of nega-$\Z_{2^k}$-bent functions) can be obtained from a bent partition,
such as a generalized semifield spread.
At the same time, a large variety of bent partitions is known. Apart from the generalized semifield
spreads, in \cite{wfw} and in the articles \cite{aakm23,jedli} on the strongly related concept of of Latin square partial difference sets packings (LP-packings), some secondary constructions of bent partitions are introduced.

Conversely, there exist $\Z_{2^k}$-bent functions, which do not come from the known constructions,
and not even from a bent partition. One example is given in \cite[Remark 7]{akm22}:
For an integer $e$ with $\gcd(2^m-1,e) = 1$ and $c_1,c_2,c_3\in\F_{2^m}$ such that 
$c_1^{-e}+c_2^{-e}+c_3^{-e} = (c_1+c_2+c_3)^{-e}$, the function $f$ from $\F_{2^m}\times\F_{2^m}$ to 
$\Z_8$ given by
\begin{equation}
\label{CEx} 
f(x,y) = \Tr^m_1((c_1^{-e}+c_2^{-e})x^ey) + 2\Tr^m_1((c_1^{-e}+c_3^{-e})x^ey) + 
4\Tr^m_1(c_1^{-e}x^ey)
\end{equation}
is a $\Z_8$-bent function. For some choices of $e,c_1,c_2,c_3$, the preimage set partition of $f$ 
is not a bent partition of $\Z_{2^m}\times\Z_{2^m}$. In particular, these functions cannot come from a generalized semifield spread or the above mentioned secondary construction.

The idea behind the construction of $(\ref{CEx})$ is Proposition \ref{cor:dualiff}
together with the observation that $f \colon \V_n\rightarrow\Z_{2^k}$ is $\Z_{2^k}$-bent if and only if $2^tf$ is gbent, for all $0\le t\le k-1$.
%
%
With the conditions on $e,c_i, i = 1,2,3$ it is guaranteed that the Maiorana-McFarland bent 
functions involved in $(\ref{CEx})$ satisfy the conditions in Proposition \ref{cor:dualiff}.

More generally, bent functions $g_0,g_1,g_2,g_3$ with $g_3=g_0+g_1+g_2$ and $g_3^*=g_0^*+g_1^*+g_2^*$
can be obtained using permutations $\pi_1,\pi_2,\pi_3,\pi_4$ of $\F_{2^m}$ satisfying the $(\mathcal{A}_m)$
property defined as below, via the corresponding Maiorana-McFarland bent functions on $\F_{2^m}\times\F_{2^m}$ of the form $f_i(x,y)=\Tr^m_1(x \pi_i(y))+h_i(y)$. The only condition the ingredients of the Maiorana-McFarland functions $f_i$ have to satisfy is 	$h_1(\pi_1^{-1}(y))+h_2(\pi_2^{-1}(y))+h_3(\pi_3^{-1}(y))+h_4(\pi_4^{-1}(y))=0$.
\begin{definition}\cite{mes14}
Let $\pi_1,\pi_2,\pi_3$ be three permutations of $\F_{2^m}$. We say that $\pi_1,\pi_2,\pi_3$ satisfy \textit{the $(\mathcal{A}_m)$ property} if
\begin{enumerate}
	\item $\pi_4=\pi_1+\pi_2 + \pi_3$  is  a permutation and
	\item $\pi^{-1}_4=\pi_1^{-1} + \pi_2^{-1} + \pi_3^{-1} $.
\end{enumerate} 
\end{definition}

The search for more $\Z_8$-bent functions (hence nega-$\Z_8$-bent functions), which do not come from a generalized semifield spread or even not from a bent partition motivates us to investigate permutations  that satisfy the $(\mathcal{A}_m)$ property. Such permutations could be constructed with the help of permutation monomials using the following result.

\begin{theorem}\cite{Mesnager2015} \label{th:permutA_m2nd class}
Let $m \geq 3$ be an integer and $d^2 \equiv 1 \mod{2^m-1}$. Let $\pi_i$ be three permutations of $\F_{2^m}$ defined by $\pi_i(y)=\alpha_i y^d$, for $i=1,2,3$, where $\alpha_i \in \F_{2^m}^*$ are pairwise distinct  elements such that $\alpha_i^{d+1}=1$ and $\alpha_4^{d+1}=1$ where $\alpha_4=\alpha_1+\alpha_2 + \alpha_3$. Then, the permutations $\pi_i$ satisfy the property ($\mathcal{A}_m$) and furthermore $\pi_i$ are involutions. 
\end{theorem}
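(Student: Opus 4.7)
The proof plan is short because the hypotheses have been arranged precisely so that the two defining conditions of $(\mathcal{A}_m)$ reduce to the same algebraic identity.

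First, I would observe that the congruence $d^2\equiv 1\pmod{2^m-1}$ forces $\gcd(d,2^m-1)=1$, so the monomial $y\mapsto y^d$ is a permutation of $\F_{2^m}$; scaling by any nonzero constant preserves this, which already shows that each $\pi_i(y)=\alpha_i y^d$ for $i=1,2,3$ is a permutation. The key computation is then
\[
\pi_i(\pi_i(y)) \;=\; \alpha_i(\alpha_i y^d)^d \;=\; \alpha_i^{\,d+1}\,y^{d^2} \;=\; y,
\]
using $\alpha_i^{d+1}=1$ and $d^2\equiv 1\pmod{2^m-1}$. Hence $\pi_1,\pi_2,\pi_3$ are involutions, and in particular $\pi_i^{-1}=\pi_i$ for $i=1,2,3$.

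Next I would address $\pi_4=\pi_1+\pi_2+\pi_3$. Evaluating pointwise,
\[
(\pi_1+\pi_2+\pi_3)(y) \;=\; (\alpha_1+\alpha_2+\alpha_3)\,y^d \;=\; \alpha_4\,y^d,
\]
so $\pi_4(y)=\alpha_4 y^d$ has exactly the same monomial form as the other three. Applying the involution calculation again with $\alpha_4^{d+1}=1$ shows $\pi_4(\pi_4(y))=y$, so $\pi_4$ is an involution and in particular a permutation; this gives property~(1) of $(\mathcal{A}_m)$. Moreover $\pi_4^{-1}=\pi_4$.

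Finally, for property~(2) of $(\mathcal{A}_m)$, using $\pi_i^{-1}=\pi_i$ for all $i\in\{1,2,3,4\}$ we get
\[
\pi_1^{-1}+\pi_2^{-1}+\pi_3^{-1} \;=\; \pi_1+\pi_2+\pi_3 \;=\; \pi_4 \;=\; \pi_4^{-1},
\]
as required. The involution claim for $\pi_1,\pi_2,\pi_3$ was already established along the way. There is no real obstacle here; the only content is the observation that the hypothesis $\alpha_4^{d+1}=1$ is precisely what is needed to make $\pi_4$ itself an involution of the same monomial form, so that $(\mathcal{A}_m)$ follows automatically from involutivity.
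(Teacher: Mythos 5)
Your proof is correct. The paper itself does not prove this statement --- it is imported verbatim from the cited reference \cite{Mesnager2015} --- so there is no in-paper argument to compare against; your verification is the natural direct computation ($\pi_i(\pi_i(y))=\alpha_i^{d+1}y^{d^2}=y$, then observing $\pi_4(y)=\alpha_4 y^d$ has the same form, so all four maps are involutions and condition (2) of $(\mathcal{A}_m)$ collapses to the definition of $\pi_4$), and it correctly covers the minor points such as $y=0$ and $\alpha_4\neq 0$ being forced by $\alpha_4^{d+1}=1$.
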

\subsection{Constructions of gbent and $\mathbb{Z}_{2^3}$-bent functions}
The following example illustrates how one can construct gbent functions and $\Z_8$-bent functions of small dimension using the above described results.
\begin{example}
\label{4.1}
Let $m=3$, and let the multiplicative group of $\F_{2^3}$ be given by $\F_{2^3}^*=\langle a \rangle$, where $a^3+a+1=0$. Let $d=2^m-2=6$, which satisfies $d^2 \equiv 1 \mod 7$. Define $\alpha_1=a, \alpha_2=a^4,\alpha_3=a^6$ and $\alpha_4=\alpha_1+\alpha_2+\alpha_3=1$.  By Theorem~\ref{th:permutA_m2nd class}, the mappings $\pi_i(y)=\alpha_i y^d$, for $i=1,2,3$, are involutions, as well as $\pi_4=\pi_1+\pi_2+\pi_3$. Define the Boolean functions 
$f_i(x,y) = \Tr^m_1(x\pi_i(y))+h_i(y)$, with	$h_i(y)=0$ for all $i=1,2,3,4$.
Since the Boolean functions $h_i$ trivially satisfy the condition 
\begin{equation*}
	h_1(\pi_1^{-1}(y))+h_2(\pi_2^{-1}(y))+h_3(\pi_3^{-1}(y))+h_4(\pi_4^{-1}(y))=0
\end{equation*}
and  $\pi_4=\pi_1+\pi_2+\pi_3$, we have that $f_1+f_2+f_3+f_4 = 0$ and $f_1^*+f_2^*+f_3^* +f_4^*= 0$. Now, from the functions $f_i$ we construct an affine space of bent functions, which, in turn, defines a gbent function. First, we observe that the set
\begin{equation*} 
	\{ \alpha_1=a, \alpha_2=a^4,\alpha_3=a^6,\alpha_4=1 \}=a+\{0, a^2,a^5,a^3  \}=a+\langle a^2,a^5\rangle
\end{equation*}
is in fact an affine space. Now set
\begin{equation*}
	\begin{split}
		a_2(x,y)&=f_1(x,y)=\Tr^m_1(x(ay)),\\
		a_0(x,y)&=f_2(x,y)+f_1(x,y)=\Tr^m_1(x(a^2y)),\\
		a_1(x,y)&=f_3(x,y)+f_1(x,y)=\Tr^m_1(x(a^5y)).\\
	\end{split}
\end{equation*}
Then, the function $f(x,y) = a_2(x,y) + 2a_0(x,y) + 4a_{1}(x,y)$ is gbent, since for the only possible choice of functions $g_0,g_1,g_2,g_3 \in \mathcal{A}= a_{2} + \langle a_0,a_1\rangle $ such that $g_0+g_1+g_2+g_3 = 0$, namely $g_0=a_2=f_1, g_1=a_0 + a_2=f_2, g_2=a_1 + a_2=f_3, g_3=a_0 + 
a_1 + a_2=f_4$, we have $g_0^*+g_1^*+g_2^*+g_3^* = 0$. 
\end{example}

Based on this example, we propose an infinite family of gbent functions arising from the permutations of $\F_{2^m}$ with the $(\mathcal{A}_m)$ property.

\begin{construction}\label{con: hi trivial}
Let $m\in \mathbb{N}$, and let $a$ be a primitive element of $\F_{2^m}$. Let $d=2^m-2$, which satisfies $d^2 \equiv 1 \mod 2^m-1$. Define $$\alpha_1=1, \; \alpha_2=a,\;\alpha_3=a^d\quad\mbox{and}\quad\alpha_4=\alpha_1+\alpha_2+\alpha_3.$$
Since $\alpha_i \in \F_{2^m}^*$ are pairwise distinct elements satisfying $\alpha_i^{d+1}=1$ and $\alpha_4^{d+1}=1$, the permutations $\pi_i$ satisfy the property ($\mathcal{A}_m$) and $\pi_i$ are involutions by Theorem~\ref{th:permutA_m2nd class}. Define the Boolean functions 
$f_i(x,y) = \Tr^m_1(x\pi_i(y))+h_i(y)$ with	$h_i(y)=0$ for all $i=1,2,3,4$.
Since the Boolean functions $h_i$  satisfy the condition 
\begin{equation*}
	h_1(\pi_1^{-1}(y))+h_2(\pi_2^{-1}(y))+h_3(\pi_3^{-1}(y))+h_4(\pi_4^{-1}(y))=0
\end{equation*}
and  $\pi_4=\pi_1+\pi_2+\pi_3$, we have that $f_1+f_2+f_3+f_4 = 0$ and $f_1^*+f_2^*+f_3^* +f_4^*= 0$. Now, from the functions $f_i$ we construct  an affine space of bent functions, which, in turn, defines a gbent function. Since  $\alpha_1,\alpha_2,\alpha_3,\alpha_4$ are four different elements satisfying  $\alpha_1+\alpha_2+\alpha_3+\alpha_4=0$, the set $\{\alpha_1,\alpha_2,\alpha_3,\alpha_4\}$ is an affine subspace, which can be written in the form
\begin{equation*}  
	\begin{split}
		\{ \alpha_1=1, \alpha_2=a,\alpha_3=a^d,\alpha_4=1+a+a^d \}&=1+\{0, 1+a,1+a^d,a+a^d  \}\\
		&=1+\langle 1+a,1+a^d\rangle .
	\end{split}
\end{equation*}
Now, define the functions $a_i$ in the following way:
\begin{equation*}
	\begin{split}
		a_2(x,y)&=f_1(x,y)=\Tr^m_1(x(1\cdot y)),\\
		a_0(x,y)&=f_2(x,y)+f_1(x,y)=\Tr^m_1(x((1+a)\cdot y)),\\
		a_1(x,y)&=f_3(x,y)+f_1(x,y)=\Tr^m_1(x((1+a^d)\cdot y)).\\
	\end{split}
\end{equation*}
Then, the function $f(x,y) = a_2(x,y) + 2a_0(x,y) + 4a_{1}(x,y)$ is gbent, since  for the only possible choice of functions $g_0,g_1,g_2,g_3 \in \mathcal{A}= a_{2} + \langle a_0,a_1\rangle $ such that $g_0+g_1+g_2+g_3 = 0$, namely $g_0=a_2=f_1, g_1=a_0 + a_2=f_2, g_2=a_1 + a_2=f_3, g_3=a_0 + 
a_1 + a_2=f_4$, we have $g_0^*+g_1^*+g_2^*+g_3^* = 0$. 
\end{construction}

In the following statement, we indicate, that apart from the trivial choice of the functions $h_1=h_2=h_3=h_4=0$, there exists a plenty of options of selecting different Boolean functions $h_i$. For the sake of a more clear presentation, we begin with a very simple case when $\pi_i(y)=\alpha_iy^d$ and $h_i(y)=\Tr^m_1(\alpha_i y^k)$, where $k=d$. However, as we show later, one can take $k\neq d$ and 
in fact use non-monomials $h_i$. The next result is based on the ideas used in~\cite[Proposition 
4.2]{ppkz}.
\begin{proposition}\label{prop: nontrivial hi correct}
Let $m \ge 3$ and $\pi_i(y)=\alpha_iy^d$ for $i=1,2,3,4$ be involutions of $\F_{2^m}$ defined as in Theorem~\ref{th:permutA_m2nd class}. Define the Boolean functions $h_i$ on $\F_{2^m}$	for $i=1,2,3,4$  as follows:
\begin{equation*}
	h_i(y)=\Tr^m_1(\beta_i y^k)\quad\mbox{for }i=1,2,3,4,
\end{equation*}
where $k=d$ and the elements $\beta_i\in\F_{2^m}^*$ are given by 
\begin{equation*}
	\beta_1=\alpha_1,\; \beta_2=\alpha_2,\; \beta_3=\alpha_3,\; \beta_4=\alpha_4.
\end{equation*}
Then, the Maiorana-McFarland bent functions $f_i(x, y)=\Tr^m_1\left(x \pi_i(y)\right)+h_i(y)$, for 
$i \in\{1,2,3,4\}$ and $x, y \in$ $\mathbb{F}_{2^{m}}$, satisfy
\begin{itemize}
	\item[i)] $f_1(x,y)+f_2(x,y)+f_3(x,y)+f_4(x,y)=0$,
	\item[ii)] $h_1(\pi_1^{-1}(y))+h_2(\pi_2^{-1}(y))+h_3(\pi_3^{-1}(y))+h_4(\pi_4^{-1}(y))=0$.
\end{itemize}
\end{proposition}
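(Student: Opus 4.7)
The plan is to verify both identities by direct computation, exploiting three structural facts that are already built into the hypotheses: (a) each $\pi_i$ is an involution, as guaranteed by Theorem \ref{th:permutA_m2nd class}, so $\pi_i^{-1}=\pi_i$; (b) $\alpha_i^{d+1}=1$ for $i=1,2,3,4$; and (c) $d^2\equiv 1\pmod{2^m-1}$. Since $k=d$ and $\beta_i=\alpha_i$, the functions $h_i(y)=\Tr^m_1(\alpha_i y^d)$ have exactly the same algebraic shape as the linear parts $\Tr^m_1(x\alpha_i y^d)$ of the bent functions $f_i$, which is what makes the bookkeeping clean.

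For part (i), I would absorb the $h_i$-term into the Maiorana-McFarland linear part and write
\[ f_i(x,y)=\Tr^m_1(x\alpha_i y^d)+\Tr^m_1(\alpha_i y^d)=\Tr^m_1\bigl((x+1)\alpha_i y^d\bigr), \]
then sum over $i\in\{1,2,3,4\}$ to obtain $\Tr^m_1\bigl((x+1)y^d(\alpha_1+\alpha_2+\alpha_3+\alpha_4)\bigr)$. Because $\alpha_4=\alpha_1+\alpha_2+\alpha_3$ and the characteristic is $2$, the coefficient $\alpha_1+\alpha_2+\alpha_3+\alpha_4=2\alpha_4=0$, so $f_1+f_2+f_3+f_4\equiv 0$.

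For part (ii), the involution property yields $\pi_i^{-1}(y)=\pi_i(y)=\alpha_i y^d$, so
\[ h_i(\pi_i^{-1}(y))=\Tr^m_1\bigl(\alpha_i(\alpha_i y^d)^d\bigr)=\Tr^m_1\bigl(\alpha_i^{d+1}y^{d^2}\bigr). \]
The hypothesis $\alpha_i^{d+1}=1$ removes the coefficient, and $d^2\equiv 1\pmod{2^m-1}$ gives $y^{d^2}=y$ on $\F_{2^m}^*$ (with the case $y=0$ trivial). Thus each summand reduces to $\Tr^m_1(y)$, independently of $i$, and adding four identical terms in $\F_2$ yields $0$.

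The ``main obstacle'' is really no obstacle: both identities collapse to the algebraic facts $\alpha_1+\alpha_2+\alpha_3+\alpha_4=0$ and $\alpha_i^{d+1}=1$, together with $y^{d^2}=y$. The only point worth a moment of care is noting that $y^{d^2}=y$ also at $y=0$, so that the identity in (ii) holds pointwise on all of $\F_{2^m}$ rather than just on $\F_{2^m}^*$.
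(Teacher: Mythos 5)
Your proof is correct and follows essentially the same direct computation as the paper: part (i) reduces to $\alpha_1+\alpha_2+\alpha_3+\alpha_4=0$ in characteristic $2$, and part (ii) uses the involution property together with $\alpha_i^{d+1}=1$ (the paper keeps the factor $y^{d^2}$ and observes that the coefficient $\sum_i\alpha_i^{d+1}=4=0$, whereas you additionally invoke $y^{d^2}=y$ to reduce each summand to $\Tr^m_1(y)$ — a cosmetic difference).
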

\begin{proof}
The statement $i)$ follows immediately from the fact that $f_4$ is defined by $ f_4(x,y)=f_1(x,y)+f_2(x,y)+f_3(x,y)$. Now, we show that the statement $ii)$ holds as well. Since 
all permutations $\pi_i(y)=\alpha_i y^d$ of $\F_{2^m}$ are involutions, we have 
\begin{equation*}
	\begin{split}
		\sum\limits_{i=1}^4 	h_i(\pi_i^{-1}(y))&=\Tr^m_1\left(\beta_1\alpha_1^ky^{kd}+\beta_2\alpha_2^ky^{kd}+\beta_3\alpha_3^ky^{kd}+\beta_4\alpha_4^ky^{kd}\right)\\
		&=\Tr^m_1\left(\alpha_1\alpha_1^dy^{d^2}+\alpha_2\alpha_2^dy^{d^2}+\alpha_3\alpha_3^dy^{d^2}+\alpha_4\alpha_4^dy^{d^2}\right)\\
		&=\Tr^m_1\left(\left(\alpha_1^{d+1}+\alpha_2^{d+1}+\alpha_3^{d+1}+\alpha_4^{d+1}\right)y^{d^2}\right)=0,\\
	\end{split}
\end{equation*}
since  $\alpha_i^{d+1}=1$ holds for all $i=1,2,3,4$.
\qed \end{proof}

Using non-trivial selection of the functions $h_i$, we can now construct gbent and $\Z_8$-bent functions, following Construction~\ref{con: hi trivial}.

\begin{construction}\label{con: hi non-trivial}
Let Maiorana-McFarland bent functions $f_i$ on $\F_{2^m}\times\F_{2^m}$ be defined as in Proposition~\ref{prop: nontrivial hi correct}.  Define the Boolean bent functions $a_i$ on $\F_{2^m}\times\F_{2^m}$ in the following way:
\begin{equation*}
	\begin{split}
		a_2(x,y)&=f_1(x,y),\\
		a_0(x,y)&=f_2(x,y)+f_1(x,y),\\
		a_1(x,y)&=f_3(x,y)+f_1(x,y).\\
	\end{split}
\end{equation*}
Then, the function $f\colon\F_{2^m}\times\F_{2^m}\to\Z_8$, given by $$f(x,y) = a_2(x,y) + 2a_0(x,y) + 4a_{1}(x,y),$$ is gbent, since 
the only functions $g_0,g_1,g_2,g_3 \in \mathcal{A}= a_{2} + \langle a_0,a_1\rangle $ with $g_0+g_1+g_2+g_3 = 0$, are exactly $g_0=a_2=f_1, \; g_1=a_0 + a_2=f_2, \; g_2=a_1 + a_2=f_3, \; g_3=a_0 + 
a_1 + a_2=f_4$. Moreover, we additionally have $g_0^*+g_1^*+g_2^*+g_3^* = 0$. Since $f_i+f_j$ is bent for all $1 \le i < j \le 3$, we have that $f$ is $\Z_8$-bent.
\end{construction}

\begin{example}
\label{Ex4.7}
Let $m=3$. Take $d=2^m-2=6$, which satisfies $d^2 \equiv 1  \mod 2^m-1$. Let the multiplicative group of $\F_{2^m}$ be given by $\F_{2^m}^*=\langle a \rangle$, where $a^6 + a^4 + a^3 + a + 1 = 0$.
Define $\alpha_1=a,\; \alpha_2=a^4,\; \alpha_3=a^6,\; \alpha_4=\alpha_1+\alpha_2+\alpha_3=1$. Following Proposition~\ref{prop: nontrivial hi correct}, define the Maiorana-McFarland Boolean bent functions $f_i\colon\mathbb{F}_{2^{m}}\times \mathbb{F}_{2^{m}}\to\F_2$ by $f_i(x, y) 
=\Tr^m_1\left(x \pi_i(y)\right)+h_i(y)$, where $\pi_i(y)=\alpha_iy^d$ and $h_i(y)=
\Tr^m_1(\alpha_i y^d)$, for $i=1,2,3,4$. Then, let  $f\colon\F_{2^m}\times\F_{2^m}\to\Z_8$  be
given by 
\begin{equation}
	\begin{split}
		f(x,y) &= f_1(x,y) + 2(f_2(x,y)+f_1(x,y)) + 4(f_3(x,y)+f_1(x,y)) \\
		&=\Tr^m_1(x\alpha_1y^d)+\Tr^m_1(\alpha_1y^d) + 2(\Tr^m_1(x(\alpha_2+\alpha_1)y^d)+\Tr^m_1((\alpha_2+\alpha_1)y^d)) \\
		&+4(\Tr^m_1(x(\alpha_3+\alpha_1)y^d)+\Tr^m_1((\alpha_3+\alpha_1)y^d)) \\
		&=\Tr^m_1(x ay^6+ay^6) + 2\Tr^m_1(xa^2y^6+a^2y^6)+4\Tr^m_1(xa^5y^6+a^5y^6).
	\end{split}
\end{equation}
Since 
$ \mathcal{H}_f(c,u)\in\left\{\pm 8,\pm 8 i,\pm (4+4 i) \sqrt{2},\pm(4-4 i) \sqrt{2}\right\}$,
we have $|\mathcal{H}_f(c,u)|=8=2^m$, for all $u  \in \mathbb{F}_{2^m}\times \mathbb{F}_{2^m}$ and all nonzero $c\in\mathbb{Z}_{2^3}$. Hence, $f$ is $\Z_8$-bent.
\end{example}
Using Magma~\cite{magma}, we checked the properties of the preimage set partition of the $\Z_8$-bent function in
Example \ref{Ex4.7}. We confirmed that it is a bent partition, which we obtained employing
permutations satisfying the $(\mathcal{A}_m)$ property.
\begin{remark}
\emph{1)} In Proposition~\ref{prop: nontrivial hi correct}, we considered  $\pi_i(y)=\alpha_iy^d$  involutions of $\F_{2^m}$ from Theorem~\ref{th:permutA_m2nd class} and Boolean functions
$h_i(y)=\Tr^m_1(\alpha_i y^k)$ with $k=d$. However, one can take $k\neq d$, provided that $\alpha_1^{k+1}=\alpha_2^{k+1}=\alpha_3^{k+1}=\alpha_4^{k+1}=1$. Note that it is always possible to find such $\alpha_i$, provided that $\F_{2^m}$ is large enough in the sense that it contains a subfield $\F_{2^l}$ with $l\ge 3$. Then one can take $\alpha_i\in\F_{2^l}$ and take $k=2^l-2$, i.e., the inversion in $\F_{2^l}$. Then, clearly,  $\alpha_1^{k+1}=\alpha_2^{k+1}=\alpha_3^{k+1}=\alpha_4^{k+1}=1$ and $\alpha_1^{d+1}=\alpha_2^{d+1}=\alpha_3^{d+1}=\alpha_4^{d+1}=1$, where $d=2^m-2$. Taking a chain of subfields $\F_{2^{l_1}}\subset \F_{2^{l_2}}\subset\cdots\subset \F_{2^{l_s}}=\F_{2^{m}}$, and corresponding $k_i=2^{l_i}-2$, one can use the functions $h_i(y)=\Tr^m_1\left( \alpha_i \left( b_1x^{k_1} + b_2x^{k_2} +\cdots +b_sx^{k_s} \right) \right)$, where $b_l\in\F_2$ are arbitrary, as well as $\alpha_1,\alpha_2,\alpha_3,\alpha_4\in\F_{2^{l_1}}$. 

\noindent\emph{2)} More generally, it is enough to take $k$ in such  a way, that $\alpha_1^{k+1}+\alpha_2^{k+1}+\alpha_3^{k+1}+\alpha_4^{k+1}=0$ for a flat $\{\alpha_1,\alpha_2,\alpha_3,\alpha_4\}$ of $\F_{2^m}$, i.e., $\alpha_1+\alpha_2+\alpha_3+\alpha_4=0$ with $\alpha_1^{d+1}=\alpha_2^{d+1}=\alpha_3^{d+1}=\alpha_4^{d+1}=1$ as in Theorem~\ref{th:permutA_m2nd class}. The latter means that $x\mapsto x^{k+1}$ maps the flat $\{\alpha_1,\alpha_2,\alpha_3,\alpha_4\}\subset\F_{2^m}$ to the flat $\{\alpha_1^{k+1},\alpha_2^{k+1},\alpha_3^{k+1},\alpha_4^{k+1}\}\subset\F_{2^m}$. For more details on this topic, we refer to~\cite{vanishingflats}.
\end{remark}

\subsection{Vectorial bent-negabent constructions in comparison to $\Z_{2^k}$-bent functions}

In \cite{kppp}, one of the key ideas employed for constructions of vectorial bent-negabent functions was to use Maiorana-McFarland bent functions with complete permutations combined with the multiplication in certain finite fields to obtain vectorial bent functions, and then find an appropriate affine transformations to transform the functions into ones whose component functions are also negabent.
The following result illustrates certain difficulties which may occur when using the same methods as in \cite{kppp} to construct gbent functions $f \colon  \mathbb{V}_n \to \mathbb{Z}_{2^k}$ for larger $k$, using component functions of vectorial bent-negabent functions with linear permutations.

\begin{proposition}\label{ex: negabent}
Let $\lbrace \alpha_0, \alpha_1, \ldots , \alpha_k \rbrace \subset \F_{2^m}$ be a set of $k+1$ linearly independent elements (over $\F_2$), where $k>2$ if $m$ is odd and $k>3$ if $m$ is even. For $i=0,1, \dots, k$, let $f_i \colon \F_{2^{m}} \times \F_{2^{m}} \to \F_2$ be the function defined by 
$$f_i(x,y)= \Tr^m_1(x \pi_i(y)) + h_i(y), \text{ for all } x,y \in \F_{2^m},$$ where $\pi_i(y)=\alpha_i y$, for all $y \in \F_{2^m}$, and $h_i \colon \F_{2^m} \to \F_2$ are arbitrary. Then, the function $F \colon \F_{2^{m}} \times \F_{2^{m}} \to \Z_{2^{k+1}}$ defined by
$$F(x,y)=f_0(x,y)+2f_1(x,y)+ \ldots 2^{k}f_k(x,y),  \text{ for all } x,y \in \F_{2^m},$$
is not $gbent$.
\end{proposition}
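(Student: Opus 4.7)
My plan is to invoke Proposition~\ref{cor:dualiff} and reduce the gbentness question to a simple algebraic identity for the inverse function on $\F_{2^m}$.

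Since each $\pi_i(y)=\alpha_i y$ is a linear permutation, every $f_i$ is a Maiorana--McFarland bent function, and every element of $\mathcal{A}:=f_k+\langle f_0,\ldots,f_{k-1}\rangle$ has the form $\T^m_1(x\beta y)+H(y)$ with $\beta\in V:=\alpha_k+W$, where $W:=\langle\alpha_0,\ldots,\alpha_{k-1}\rangle$. The linear independence of $\alpha_0,\ldots,\alpha_k$ forces $\beta\neq 0$ for every such element, so $\mathcal{A}$ is an affine space of bent functions; by Proposition~\ref{cor:dualiff}, $F$ is therefore gbent if and only if $(g_0+g_1+g_2)^*=g_0^*+g_1^*+g_2^*$ for every triple $g_0,g_1,g_2\in\mathcal{A}$.

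Using the formula $\bigl(\T^m_1(x\beta y)+H(y)\bigr)^*=\T^m_1(y\beta^{-1}x)+H(\beta^{-1}x)$ for the dual of a Maiorana--McFarland bent function, the difference $(g_0+g_1+g_2)^*-(g_0^*+g_1^*+g_2^*)$ splits into the $y$-linear summand $\T^m_1(y\,\eta\,x)$, with
\[
\eta=(\beta_0+\beta_1+\beta_2)^{-1}+\beta_0^{-1}+\beta_1^{-1}+\beta_2^{-1},
\]
plus a term depending only on $x$. For the difference to vanish as a function of $(x,y)$, the constant $\eta$ must satisfy $\eta x=0$ for every $x$, hence $\eta=0$; crucially, this requirement involves only the $\beta_j$'s and not the $h_i$'s, so no choice of $h_i$ can compensate. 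Setting $\beta_0=\alpha_k$, $\beta_1=\alpha_k+v$, $\beta_2=\alpha_k+w$ with $v,w\in W$, the condition $\eta=0$ becomes the vanishing of the second-order derivative
\[
D_v D_w\phi(\alpha_k)=\alpha_k^{-1}+(\alpha_k+v)^{-1}+(\alpha_k+w)^{-1}+(\alpha_k+v+w)^{-1}=0,
\]
required to hold for all $v,w\in W$, where $\phi(x)=x^{-1}$.

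A direct computation in characteristic two gives the closed form
\[
D_v D_w\phi(\alpha_k)=\frac{v\,w\,(v+w)}{\alpha_k(\alpha_k+v)(\alpha_k+w)(\alpha_k+v+w)}.
\]
Since $\alpha_k\notin W$, the denominator is nonzero for all $v,w\in W$, so the condition is equivalent to $vw(v+w)=0$ for every $v,w\in W$. Under the hypothesis $k\ge 3$, $\dim W\ge 2$, and choosing $v=\alpha_0$, $w=\alpha_1$ yields linearly independent nonzero elements with $v+w\ne 0$, hence $vw(v+w)\ne 0$; this contradicts the required vanishing, so $F$ is not gbent. The main obstacle is the second step, where one must cleanly isolate the $y$-linear part of the dual identity and observe that its vanishing depends only on the coefficients $\beta_j$. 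Once this reduction to an identity for the inverse function is in place, the rest is a short calculation; in fact the argument only uses $\dim W\ge 2$, so the hypotheses of the proposition are not tight and the parity of $m$ plays no role.
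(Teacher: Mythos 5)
Your proof is correct, and it follows the paper's framework up to the decisive step: both reduce gbentness to the dual condition of Proposition~\ref{cor:dualiff}, use the Maiorana--McFarland dual formula $\bigl(\Tr^m_1(x\pi(y))+h(y)\bigr)^*=\Tr^m_1(y\pi^{-1}(x))+h(\pi^{-1}(x))$, correctly observe that the $h_i$ only contribute to the $y$-free part, and arrive at the necessary condition that the sum of inverses over a $2$-flat in $\alpha_k+\langle\alpha_0,\dots,\alpha_{k-1}\rangle$ vanishes. Where you genuinely diverge is in refuting that condition. The paper argues via differential uniformity of $x\mapsto x^{-1}$: for odd $m$ it invokes APN-ness directly, while for even $m$ (where the inverse is only $4$-uniform) it needs a second flat built from $\alpha_3$, a counting argument producing six solutions of a derivative equation, and hence the stronger hypothesis $k>3$. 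You instead compute the sum in closed form,
\[
\alpha_k^{-1}+(\alpha_k+v)^{-1}+(\alpha_k+w)^{-1}+(\alpha_k+v+w)^{-1}
=\frac{vw(v+w)}{\alpha_k(\alpha_k+v)(\alpha_k+w)(\alpha_k+v+w)},
\]
which is nonzero whenever $v,w$ are linearly independent and the flat avoids $0$ --- and it does avoid $0$ precisely because $\alpha_k\notin\langle\alpha_0,\dots,\alpha_{k-1}\rangle$. This identity (which I verified) is more elementary and uniform: it removes the case split on the parity of $m$, explains structurally why the paper's even-$m$ detour is needed only if one insists on quoting differential uniformity (the vanishing $2$-flats of the inverse map all contain $0$), and shows, as you note, that the stated hypotheses are not tight --- $\dim\langle\alpha_0,\dots,\alpha_{k-1}\rangle\ge 2$ already suffices. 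What the paper's route buys in exchange is a connection to standard, quotable properties (APN-ness, $4$-differential uniformity) rather than an ad hoc computation; but your argument is self-contained and strictly stronger.
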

\begin{proof}
By Proposition \ref{cor:dualiff}, the function $F$ is gbent if and only if $ \mathcal{A} = f_{k} + \langle f_0,f_1,\ldots,f_{k-1}\rangle $
is an affine vector space of bent functions such that for any $g_0,g_1,g_2,g_3 \in \mathcal{A}$ with $g_0+g_1+g_2+g_3 = 0$ we have
$g_0^*+g_1^*+g_2^*+g_3^* = 0$. Since
$\alpha_0, \alpha_1, \ldots , \alpha_k$ are linearly independent,
$ \mathcal{A} = f_{k} + \langle f_0,f_1,\ldots,f_{k-1}\rangle $
is indeed an affine vector space of bent functions in the Maiorana-McFarland class. Set $g_0=f_k+f_0$, $g_1=f_k+f_1$, $g_2=f_k+f_2$ and $g_3=f_k+f_0+f_1+f_2$. We then have $g_0+g_1+g_2+g_3 = 0$. The dual of the bent function of the form $f(x,y)=\Tr^m_1(x \pi(y)) + h(y)$ is $f^*(x,y)=\Tr^m_1(y \pi^{-1}(x)) + h(\pi^{-1}(x))$. Hence, if we set $\beta_i=\alpha_i+\alpha_k$, for $i=0,1,2$, and $\beta_3=\alpha_0+\alpha_1+\alpha_2+\alpha_k$, we deduce that 
$(g_0^*+g_1^*+g_2^*+g_3^*)(x,y)$ is equal to $$\Tr^m_1(y (\beta_0^{-1}+\beta_1^{-1}+\beta_2^{-1}+\beta_3^{-1})x) + h^*(x),$$
for all $x,y \in \F_{2^m}$ and some $h^* \colon \F_{2^m} \to \F_2$. Hence, $g_0^*+g_1^*+g_2^*+g_3^* = 0$ implies $\beta_0^{-1}+\beta_1^{-1}+\beta_2^{-1}+\beta_3^{-1}=0$. However, the multiplicative inverse permutation $\phi \colon \F_{2^m} \to \F_{2^m}$, defined by $\phi(x)=x^{-1}$, for all $x \in \F_{2^m}^*$ and $\phi(0)=0$, is almost perfect nonlinear for odd $m$. Since $\beta_0+\beta_1+\beta_2+\beta_3=0$, the set $\{\beta_0,\beta_1,\beta_2,\beta_3 \}$ is a $2$-dimensional flat, and consequently $\beta_0^{-1}+\beta_1^{-1}+\beta_2^{-1}+\beta_3^{-1} \neq 0$, because $\phi(x)=x^{-1}$ is APN. We conclude that the function $F$ is not gbent for odd $m$ and $k>2$. 

Assume now that $m$ is even and $k>3$. Using the same notation, set $\overline{g}_2=f_k+f_3$ and $\overline{g}_3=f_k+f_0+f_1+f_3$. Define $\overline{\beta}_2=\alpha_3+\alpha_k$ and $\overline{\beta}_3=\alpha_0+\alpha_1+\alpha_3+\alpha_k$.
Then we have $g_0+g_1+\overline{g}_2+\overline{g}_3 = 0$ and the sum of the duals  
$(g_0^*+g_1^*+\overline{g}_2^*+\overline{g}_3^*)(x,y)$ is equal to $$\Tr^m_1(y (\beta_0^{-1}+\beta_1^{-1}+\overline{\beta}_2^{-1}+\overline{\beta}_3^{-1})x) + \overline{h}^*(x),$$
for all $x,y \in \F_{2^m}$ and some $\overline{h}^* \colon \F_{2^m} \to \F_2$. Hence, $g_0^*+g_1^*+\overline{g}_2^*+\overline{g}_3^*=0$ implies 
$\beta_0^{-1}+\beta_1^{-1}+\overline{\beta}_2^{-1}+\overline{\beta}_3^{-1}=0$. 
As before, let $\phi \colon \F_{2^m} \to \F_{2^m}$ be the multiplicative inverse permutation. We have
\begin{equation*}
	\begin{split}
		\phi(\beta_0)+ \phi(\beta_0+(\beta_1+\beta_0)) &= \phi(\beta_2)+ \phi(\beta_2+(\beta_1+\beta_0))\\   &=\phi(\overline{\beta}_2)+ \phi(\overline{\beta_2}+(\beta_1+\beta_0)).
	\end{split}
\end{equation*}
If we set $\phi(\beta_0)+ \phi(\beta_1)=d$, we deduce that the equation $\phi(x)+\phi(x+(\beta_1+\beta_0))=d$ has at least $6$ solutions. However, this is a contradiction because for even $m$ the permutation $\phi \colon \F_{2^m} \to \F_{2^m}$ is $4$-differentially uniform. We conclude that $F$ can not be gbent for even $m$ and $k>3$. \qed
\end{proof}

On the other hand, the following result shows that it is actually possible to construct $\mathbb{Z}_{2^k}$-bent functions using similar ideas as in Proposition \ref{ex: negabent}. However, since the permutations used in the construction are not complete, it is not possible to transform them and obtain vectorial bent-negabent functions using the methods from \cite{kppp}.

\begin{theorem}\label{ex: inverseZk}
Let $\lbrace \alpha_0, \alpha_1, \ldots , \alpha_{k-1} \rbrace \subset \F_{2^m}$ be a set of $k$ linearly independent elements (over $\F_2$). For $i=0,1, \dots, k-1$, let $f_i \colon \F_{2^{m}} \times \F_{2^{m}} \to \F_2$ be the function defined by 
$$f_i(x,y)= \Tr^m_1(x \pi_i(y)), \text{ for all } x,y \in \F_{2^m},$$ 
where $\pi_i(y)=\alpha_i y^{-1}$, for all $y \in \F_{2^m}^*$ and $\pi_i(0)=0$. Let $F \colon \F_{2^{m}} \times \F_{2^{m}} \to \Z_{2^{k}}$ be the generalized Boolean function defined by
$$F(x,y)=f_0(x,y)+2f_1(x,y)+ \cdots + 2^{k-1}f_{k-1}(x,y),  \text{ for all } x,y \in \F_{2^m}.$$
Then, $F$ is a $\mathbb{Z}_{2^k}$-bent function.
\end{theorem}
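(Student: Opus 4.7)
The plan is to verify the $\Z_{2^k}$-bentness of $F$ by appealing to the characterization recalled in Section~\ref{sec3}: $F$ is $\Z_{2^k}$-bent if and only if $2^tF$ is gbent for every $t=0,1,\dots,k-1$. For each such $t$, I would establish gbentness of $2^tF$ by invoking Proposition~\ref{cor:dualiff} applied to the affine space
\[ \mathcal{A}_t = f_{k-1-t} + \langle f_0,f_1,\ldots,f_{k-2-t}\rangle, \]
where the zero coefficients occurring at the bottom of the $2$-adic expansion of $2^tF$ simply do not contribute to the span.

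First I would observe that every $\pi_i(y)=\alpha_iy^{-1}$ is an involution of $\F_{2^m}$, so each $f_i$ is a Maiorana--McFarland bent function. More importantly, for any nonempty $T\subseteq\{0,\dots,k-1\}$, the sum
\[ g_T(x,y) := \sum_{i\in T}f_i(x,y) = \Tr^m_1\bigl(x\,\beta_T\,y^{-1}\bigr), \qquad \beta_T:=\sum_{i\in T}\alpha_i, \]
is again a Maiorana--McFarland bent function with permutation $\pi_T(y)=\beta_Ty^{-1}$, because the linear independence of $\{\alpha_0,\ldots,\alpha_{k-1}\}$ over $\F_2$ forces $\beta_T\neq 0$. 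Since every relevant subset of the $\alpha_i$'s remains linearly independent, each $\mathcal{A}_t$ is therefore an affine space of bent functions, and each of its elements is of the form $g_T$ for some nonempty $T$ containing $k-1-t$.

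The heart of the proof is the following key computation: since $\pi_T$ is once more an involution, i.e., $\pi_T^{-1}=\pi_T$, one has
\[ g_T^*(x,y) = \Tr^m_1(y\,\beta_T\,x^{-1}) = \sum_{i\in T}\Tr^m_1(\alpha_i\,y\,x^{-1}) = \sum_{i\in T}f_i^*(x,y). \]
Thus the assignment $T\mapsto g_T^*$ is $\F_2$-linear in the indicator vector of $T$, in exactly the same manner as $T\mapsto g_T$. Consequently, whenever $g_{T_0},g_{T_1},g_{T_2}\in\mathcal{A}_t$ and $g_{T_3}:=g_{T_0}+g_{T_1}+g_{T_2}\in\mathcal{A}_t$ (equivalently, each index $i$ belongs to an even number of the $T_j$'s), the corresponding duals satisfy $g_{T_3}^*=g_{T_0}^*+g_{T_1}^*+g_{T_2}^*$, which is precisely the dual-sum condition required by Proposition~\ref{cor:dualiff}.

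Hence every $2^tF$ is gbent, and therefore $F$ is $\Z_{2^k}$-bent. The only delicate point is recognising that the identity $\pi_T^{-1}=\pi_T$ makes the dual operation itself $\F_2$-linear on $\mathcal{A}_t$; once this observation is in hand the dual-sum condition is automatic, and the remainder of the argument is just bookkeeping through the $2$-adic decomposition of $2^tF$ for each $t$.
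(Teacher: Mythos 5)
Your proposal is correct and follows essentially the same route as the paper's proof: both rest on the observation that $\pi_\alpha(y)=\alpha y^{-1}$ is an involution, so the dual of $f_\alpha(x,y)=\Tr^m_1(x\alpha y^{-1})$ is $f_\alpha(y,x)$, which is additive in $\alpha$, making the dual-sum condition of Proposition~\ref{cor:dualiff} automatic for each $2^tF$. Your write-up merely makes explicit the bookkeeping (the affine spaces $\mathcal{A}_t$ and the nonvanishing of $\beta_T$ via linear independence) that the paper leaves implicit.
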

\begin{proof} 
Take any $\alpha \in \F_{2^m}^*$, and define $\pi_{\alpha}(y)=\alpha y^{-1}$, for all $y \in \F_{2^m}^*$ and $\pi_{\alpha}(0)=0$.
Note that $\pi_{\alpha}^{-1}(y)=\pi_{\alpha}(y)$, for all $y \in \F_{2^m}^*$, because $\alpha (\alpha y^{-1})^{-1} =y$. Also, we have $\pi_{\alpha}^{-1}(0)=\pi_{\alpha}(0)=0$, hence $\pi_{\alpha}^{-1}=\pi_{\alpha}$. We deduce that the dual of the function $f_{\alpha}(x,y)=\Tr^m_1(x \pi_{\alpha}(y))$ is given by
\begin{equation}\label{eq: dualsofalpha}
	f_{\alpha}^*(x,y)= \Tr^m_1(y \pi_{\alpha}(x))=f_{\alpha}(y,x), \text{ for all } x,y \in \F_{2^m}.
\end{equation}
Combining Equation \eqref{eq: dualsofalpha} with Proposition \ref{cor:dualiff}, we deduce that $F$ is a $\mathbb{Z}_{2^k}$-bent function.
\qed \end{proof}

Motivated by Proposition \ref{ex: negabent} and Theorem \ref{ex: inverseZk}, we propose the following open problem regarding the existence of vectorial bent-negabent functions whose coordinate functions form a $\mathbb{Z}_{2^k}$-bent functions.

\begin{problem}\label{problem: vec negabent and Z2k-bent}
Find vectorial bent-negabent functions $F\colon \F_2^{2m} \to \F_2^{k}$ given by $F=(f_0,f_1, \ldots ,f_{k-1})$ such that the function $F'\colon \F_2^{2m} \to \mathbb{Z}_{2^k}$ defined by $$F'=f_0+2f_1+ \cdots + 2^{k-1}f_{k-1}$$  is a $\mathbb{Z}_{2^k}$-bent function, or show that such functions do not exist.
\end{problem}

In the sense of Problem~\ref{problem: vec negabent and Z2k-bent}, we suggest the following (very related) questions:
\begin{enumerate}
	\item Is it possible to find functions which are simultaneously gbent and nega-gbent?
	\item Is it possible to find functions which are simultaneously $\Z_{2^k}$-bent and nega-$\Z_{2^k}$-bent?
	\item With Maiorana-McFarland bent functions stemming from complete mappings one can derive vectorial bent-negabent functions, as in \cite{kppp}. However,  as Theorem~\ref{ex: inverseZk} illustrates, the coordinate functions of a gbent or even $\Z_{2^k}$-bent function cannot be specified using them. Do there exist combinatorial objects that can be simultaneously identified as vectorial bent-negabent and gbent (alternatively $\Z_{2^k}$-bent) functions?
	
	\item The previous problem leads to an interesting subcase, thus whether there exist complete permutations that satisfy the $(\mathcal{A}_m)$ property? 
\end{enumerate}

\section*{Acknowledgements} 
The authors would like to thank Alexander Pott for helpful
suggestions all along this work. \\
Nurdag\"{u}l Anbar is supported by T\"UB\.ITAK Project under Grant 120F309.
Wilfried Meidl is supported by the FWF Project P 35138.
Sadmir Kudin and Enes Pasalic are partly supported  by the Slovenian Research Agency (research program P1-0404 and research projects N1-0159, J1-2451 and J1-4084).



\end{document}